\theoremstyle{plain}
\newtheorem{thm}{Theorem}[section]
\newtheorem{prop}{Proposition}[section]
\newtheorem{lem}{Lemma}[section]
\renewcommand{\div}{\mathop{\mathrm{div}}}
\newcommand{\grad}{\mathop{\mathrm{grad}}}
\newcommand{\vol}{\mathop{\mathrm{vol}}}
\newcommand{\mesh}{\mathop{\mathrm{mesh}}}
\renewcommand{\vec}{\mathop{\mathrm{vec}}}
\newcommand{\sym}{\mathop{\mathrm{sym}}}
\newcommand{\skw}{\mathop{\mathrm{skw}}}
\newcommand{\curl}{\mathop{\mathrm{curl}}}
\newcommand{\half}{{\textstyle{\frac{1}{2}}}}
\newcommand{\tr}{\mathsf{T}}
\newcommand{\Tr}{\mathrm{Tr}}
\newcommand{\trace}{\mathop{\mathrm{tr}}}
\newcommand{\jump}[1]{[\![#1]\!]}
\newcommand{\descref}[1]{\hyperref[#1]{\rm #1}}
\title{%
  A mixed finite element for weakly-symmetric elasticity
}%
\author{Tobin Isaac\footnote{%
  \href{mailto:tisaac@cc.gatech.edu}{tisaac@cc.gatech.edu},
  School of Computational Science and Engineering, Georgia Institute of
  Technology.%
  }}
\begin{document}

\maketitle

\abstract{%
  We develop a finite element discretization for the weakly symmetric
  equations of linear elasticity on tetrahedral meshes.  The finite element
  combines, for $r \geq 0$, discontinuous polynomials of $r$ for the
  displacement, $H(\div)$-conforming polynomials of order $r+1$ for the
  stress, and $H(\curl)$-conforming polynomials of order $r+1$ for the vector
  representation of the multiplier.  We prove that this triplet is stable and
  has optimal approximation properties.  The lowest order case can be combined
  with inexact quadrature to eliminate the stress and multiplier variables,
  leaving a compact cell-centered finite volume scheme for the displacement.
}%

\section{Introduction}

\subsection{Symmetric linear elasticity}


We consider the equations of linear elasticity in mixed first-order form,
\begin{equation}\label{eq:elas}
  \begin{aligned}
    A\sigma &= \sym \grad u, &
    \div \sigma &= g, & & x\in\Omega,\\
    & & u &= 0, & & x\in\partial\Omega.
  \end{aligned}
\end{equation}
Here $\sigma$ is the stress tensor, $u$ is the displacement, $A$ is the
compliance tensor, and $\sym G := \half(G + G^\tr)$.  These equations have the
well-posed variational form:
\begin{equation}\label{eq:symwf}
  \begin{aligned}
    \int_\Omega (A \sigma : \tau + u \cdot \div \tau) + (\div \sigma - g)
    \cdot v\ dx &= 0,
    & & \tau \in H(\div;\mathbb{S}), &v \in L^2(\mathbb{V}).
  \end{aligned}
\end{equation}
Here $\mathbb{S}$ is the symmetric subspace of $\mathbb{M}$, the space of
$3\times 3$ matrices, $\mathbb{V}$ is $\mathbb{R}^3$, and $H(\div;\mathbb{S})$
and $L^2(\mathbb{V})$ have the usual meanings. We define the bilinear forms
\[
  \begin{aligned}
    a(\sigma,\tau) &:= \int_{\Omega} A\sigma : \tau\ dx, &
    b(\tau,v) &:= \int_{\Omega} \div \tau \cdot  v\ dx.
  \end{aligned}
\]
The well-posedness of \eqref{eq:symwf} is guaranteed by the coercivity of $a$ on
the kernel of $b$ and the fact that $b$ satisfies the
Ladyzhenskaya-Babu\v{s}ka-Brezzi (LBB) stability condition
\cite{Ladyzhenskaya1969,Babuska1973,Brezzi1974}.  We will use $\ker(f,X)$ to
refer to the kernel of operator $f$ on space $X$, and $\ker(g,X;Y)$ to refer
to the kernel of the bilinear form $g:X\times Y \to \mathbb{R}$ interpreted as
an operator in $\mathcal{L}(X;Y^*)$.  We thus state the well-posedness of
\eqref{eq:symwf} as the existence of constants $\alpha > 0$ and $\beta > 0$ such
that,
\[
  \begin{aligned}
    \inf_{\tau\in\ker(b,H(\div;\mathbb{S});L^2(\mathbb{V}))}
    a(\tau,\tau) &\geq \alpha \|\tau\|_{\div}^2, \\
    \inf_{v\in L^2(\mathbb{V})} \sup_{\tau \in H(\div;\mathbb{S})}
    b(\tau,v) &\geq \beta \|\tau\|_{\div}\|v\|_0.
  \end{aligned}
\]
We use $\|\cdot \|_{\{k,\div,\curl\},\Omega}$ for the $H^k(\Omega)$,
$H(\div;\Omega)$, and $H(\curl;\Omega)$ norms, dropping $\Omega$ where it is
clear from context, and $|\cdot|_{\{k,\div,\curl\},\Omega}$ for their
seminorms.

\Cref{eq:elas} conserves both linear momentum ($\div \sigma = g$) and
angular momentum ($\sigma = \sigma^\tr$).  Finite elements that conserve both
pointwise have recently been developed \cite{AdamsCockburn2004,
ArnoldAwanouWinther2008, ArnoldAwanouWinther2014, GopalakrishnanGuzman2011,
Hu2017, HuZhang2016}.  These discretizations are either high order
\cite{AdamsCockburn2004,ArnoldAwanouWinther2008,Hu2017}, nonconforming
\cite{ArnoldAwanouWinther2014,GopalakrishnanGuzman2011}, or require bubble
functions in the low-order case \cite{HuZhang2016}.  We set these methods
aside until our closing discussion.

\subsection{Weakly-symmetric linear elasticity}

We have more flexibility in discretization if we start from the
weakly-symmetric version of the variational equations, in which the symmetry
of $\sigma$ is enforced by a Lagrange multiplier $p$:
\begin{equation}\label{eq:hr}
  \begin{aligned}
    \int_\Omega (A \sigma : \tau + \div \tau \cdot u + \tau : p)\ dx &= 0,
    & & \tau \in H(\div;\mathbb{M}), \\
    \int_\Omega \div \sigma \cdot v\ dx &= \int_{\Omega} g \cdot v\ dx,
    & & v \in L^2(\mathbb{V}), \\
    \int_\Omega \sigma : q\ dx &= 0, & & q\in L^2(\mathbb{K}).
  \end{aligned}
\end{equation}
Here $\mathbb{K}$ is the skew-symmetric subspace of $\mathbb{M}$. Henceforth
we name the function spaces $\Sigma := H(\div;\mathbb{M})$, $V :=
L^2(\mathbb{V})$ and $Q := L^2(\mathbb{K})$. We assume that the original
compliance tensor $A:\mathbb{S}\to\mathbb{S}$ is extended continuously to
$A:\mathbb{M}\to\mathbb{M}$ so that $a$ is coercive on $\ker(\div,\Sigma)$. We
define the bilinear form $c(\tau,q) := \int_\Omega \skw \tau : q\ dx$, where
$\skw G := \half(G - G^\tr)$, which lets us rewrite \eqref{eq:hr} as
\[
  \begin{aligned}
  a(\sigma, \tau) + \{b(\tau,u) + c(\tau,p)\} + \{b(\sigma,v) + c(\sigma, q)\}
  &=
  (g,v), && (\tau, v, q) \in \Sigma \times V \times Q.
  \end{aligned}
\]
%

On the one hand, weakly-imposed symmetry is a simplification because $b$ is
now a vector-valued version of the familiar bilinear operator on
$H(\div;\mathbb{V})\times L^2(\mathbb{R})$ that originally motivated the
development of the LBB condition, and we have well-established choices of
discrete spaces that satisfy LBB stability for this operator.  On the other
hand, this is a complication because LBB stability must be satisfied with
respect to a larger constraint space, $V\times Q$.

\begin{prop}\label{prop:lbba}
  There exists $\gamma > 0$ such that
  %
  \[
    \inf_{(v,q)\in V \times Q}
    \sup_{\tau \in \Sigma}\ 
    b(\tau,v) + c(\tau,q) \geq \gamma
    \|\tau\|_{\div}(\|v\|_0 + \|q\|_0).
    \qed
  \]
  %
\end{prop}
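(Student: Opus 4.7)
The plan is to construct, for each $(v,q)\in V\times Q$, a single test tensor $\tau\in\Sigma$ with $\div\tau=v$, $\skw\tau=q$, and $\|\tau\|_{\div}\leq C(\|v\|_0+\|q\|_0)$ for a constant $C$ depending only on $\Omega$. Once $\tau$ is in hand, the identities $b(\tau,v)=\|v\|_0^2$ and $c(\tau,q)=\int_\Omega q:q\,dx=\|q\|_0^2$ yield
\[
  \sup_{\tau'\in\Sigma}\frac{b(\tau',v)+c(\tau',q)}{\|\tau'\|_{\div}}
  \geq \frac{\|v\|_0^2+\|q\|_0^2}{C(\|v\|_0+\|q\|_0)}
  \geq \frac{1}{2C}\bigl(\|v\|_0+\|q\|_0\bigr),
\]
using $a^2+b^2\geq\half(a+b)^2$; this gives the claim with $\gamma=1/(2C)$.

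I would build $\tau=\tau_v+\tau_q$ in two stages. For the first, the standard inf-sup for $H(\div;\mathbb{V})\times L^2(\mathbb{R})$ applied row by row gives $\tau_v\in\Sigma$ with $\div\tau_v=v$ and $\|\tau_v\|_{\div}\leq C_1\|v\|_0$: extend $v$ by zero to an enclosing ball, solve the vector Poisson problem with homogeneous Dirichlet data, and take $\tau_v$ to be the negative gradient, using elliptic regularity for the bound. This step is routine.

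The real work is in the second stage. Setting $q':=q-\skw\tau_v\in Q$, I must produce $\tau_q\in\Sigma$ with $\div\tau_q=0$, $\skw\tau_q=q'$, and $\|\tau_q\|_0\leq C_2\|q'\|_0$. This amounts to showing that $\skw:\ker(\div,\Sigma)\to Q$ is surjective with a bounded right inverse, which is a consequence of the elasticity (Bernstein--Gelfand--Gelfand / Saint-Venant) complex: identifying $q'$ with its axial vector $\phi\in L^2(\mathbb{V})$, one solves an auxiliary higher-order elliptic problem for a vector potential and forms $\tau_q$ by differentiation, so that $\tau_q$ is automatically divergence-free (as a curl of the potential) while its skew part matches $q'$. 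This is the continuous stability mechanism used by Arnold--Falk--Winther in the analysis of weakly-symmetric elasticity, and it is the main technical obstacle of the proof; the remaining bounds follow from elliptic regularity.

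Finally, $\tau:=\tau_v+\tau_q$ satisfies $\div\tau=v$ and $\skw\tau=q$, and the triangle inequality gives $\|\tau\|_{\div}\leq\|\tau_v\|_{\div}+\|\tau_q\|_0\leq C(\|v\|_0+\|q\|_0)$, closing the argument.
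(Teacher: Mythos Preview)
Your approach is correct and mirrors the paper's: the paper reduces Proposition~\ref{prop:lbba} to Propositions~\ref{prop:lbbb} and~\ref{prop:lbbc} (exactly your $\tau_v$ and $\tau_q$ stages), and proves the latter by the same curl-of-a-potential mechanism you attribute to Arnold--Falk--Winther. The paper is only more explicit about the step you leave vague: rather than a ``higher-order'' problem, one solves $\div\nu=\vec q'$ for $\nu\in H^1(\mathbb{M})$, sets $\mu=\Xi^{-1}\nu$ with $\Xi A:=A^\tr-\trace(A)I$, and uses the algebraic identity $\div\Xi\mu=2\vec\skw\curl\mu$ to obtain $\skw(\curl\mu)=\half q'$ with the required bound.
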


%
It is simple to check that \cref{prop:lbba} is equivalent to the following
two proposition.
\begin{prop}
  \label{prop:lbbb}
  There exists $\tilde \gamma > 0$ such that
  %
  $
    \inf_{v\in V} \sup_{\tau \in \Sigma}\
    b(\tau,v) \geq \tilde \gamma \|\tau\|_{\div}\|v\|_0.
    \qed
  $
  %
\end{prop}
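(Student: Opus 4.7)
The plan is to construct, for each nonzero $v \in V$, an explicit $\tau \in \Sigma$ whose divergence realises $v$ and whose $H(\div)$-norm is controlled by $\|v\|_0$. This is the standard route to an LBB condition via an explicit lifting, adapted from the scalar $H(\div)$--$L^2$ pairing to its matrix/vector analogue by acting row by row.

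For the lift I would solve a vector-valued Poisson problem. First I would let $\phi \in H^1_0(\Omega;\mathbb{V})$ be the weak solution of $-\Delta\phi = v$, which exists uniquely by Lax--Milgram with $\|\phi\|_1 \leq C_\Omega \|v\|_0$, where $C_\Omega$ depends only on $\Omega$ through the Poincar\'e constant. I would then set $\tau := -\nabla\phi$ viewed as a matrix field, i.e.\ $\tau_{ij} := -\partial_j \phi_i$. Testing the weak formulation against compactly supported smooth vector fields gives $\div\tau = v$ in the distributional sense, and since $v \in L^2$ this places $\tau$ in $\Sigma$ with
\[
  \|\tau\|_{\div}^2 \;=\; \|\nabla\phi\|_0^2 + \|v\|_0^2 \;\leq\; (C_\Omega^2 + 1)\|v\|_0^2.
\]
With this particular choice of $\tau$, $b(\tau,v) = \int_\Omega \div\tau\cdot v\,dx = \|v\|_0^2$, which yields $b(\tau,v)/\|\tau\|_\div \geq (C_\Omega^2 + 1)^{-1/2}\|v\|_0$ and so $\tilde\gamma := (C_\Omega^2+1)^{-1/2}$ works.

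I do not expect a genuine obstacle. The one step worth highlighting is that choosing $\tau = -\nabla\phi$, rather than a stress obtained from $\phi$ via an elasticity or Stokes problem, sidesteps any demand for $H^2$-regularity of $\phi$; the argument then works on any Lipschitz $\Omega$, with no convexity or smoothness hypothesis beyond what was already needed to define $\Sigma$ and $V$. In particular, no symmetry of $\tau$ is required, which is exactly why \cref{prop:lbbb} (but not the symmetric analogue) admits such an elementary proof.
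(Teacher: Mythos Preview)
Your argument is correct. The paper does not actually prove \cref{prop:lbbb}: the \qed\ at the end of the statement signals that it is taken as known, and the surrounding text says only that it is ``a vector-valued generalization of a well established result.'' So there is no paper proof to compare against in detail.

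That said, it is worth noting how your lift differs from the one the paper invokes for the companion \cref{prop:lbbc}. There the paper cites Girault--Raviart to obtain $\nu\in H^1(\mathbb{M})$ with $\div\nu$ equal to a prescribed $L^2$ field and $\|\nu\|_1$ controlled --- a Bogovski\u{\i}-type right inverse of the divergence. That stronger $H^1$ lift would also prove \cref{prop:lbbb}, but it is more than is needed: you only require $\tau\in H(\div;\mathbb{M})$, and your Poisson construction $\tau=-\nabla\phi$ delivers exactly that with nothing beyond Lax--Milgram and Poincar\'e on a Lipschitz domain. Your closing remark about avoiding $H^2$-regularity and symmetry is on point and explains why this proposition is so much easier than its symmetric counterpart.
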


\begin{prop}
  \label{prop:lbbc}
  There exists $\hat{\gamma} > 0$ such that
  %
  $
    \inf_{q\in Q} \sup_{\tau \in \ker(b,\Sigma;V)}
    c(\tau,q)
    \geq \hat\gamma \|\tau\|_{\div}\|q\|_0.
  $
\end{prop}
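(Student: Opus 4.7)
The plan is, for each fixed $q\in Q$, to exhibit a single $\tau\in\ker(b,\Sigma;V)$ for which $c(\tau,q)$ is large compared with $\|\tau\|_{\div}\|q\|_0$. The main ingredients will be an $L^2$ Helmholtz-type decomposition of $L^2(\Omega;\mathbb{M})$ together with Korn's first inequality.

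I will begin by viewing $q\in L^2(\Omega;\mathbb{K})$ as an element of $L^2(\Omega;\mathbb{M})$ and solving the Dirichlet Poisson problem $\Delta\psi = \div q$ for $\psi\in H^1_0(\Omega;\mathbb{V})$, which is well-posed because $\div q\in H^{-1}(\Omega;\mathbb{V})$. Setting $\tau:=q-\grad\psi$ produces an $L^2$-orthogonal decomposition $q = \tau + \grad\psi$ with $\div\tau = 0$ in the sense of distributions; thus $\tau\in\Sigma$, $\|\tau\|_{\div}=\|\tau\|_0$, and $\tau\in\ker(b,\Sigma;V)$. Orthogonality is the standard identity $\int_\Omega \tau:\grad\psi\,dx = -\langle\div\tau,\psi\rangle_{H^{-1},H^1_0} = 0$.

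With this $\tau$ fixed, two short computations do the work. Because $q$ is skew-symmetric, $\tau:q = \skw\tau:q$, and orthogonality yields
\[
  c(\tau,q) \;=\; \int_\Omega \tau:q\,dx \;=\; \int_\Omega \tau:(\tau+\grad\psi)\,dx \;=\; \|\tau\|_0^2.
\]
Pairing the decomposition with $\grad\psi$ instead gives $\|\grad\psi\|_0^2 = \int_\Omega \grad\psi:q\,dx = \int_\Omega \skw\grad\psi:q\,dx \leq \|\skw\grad\psi\|_0\|q\|_0$, again by skewness of $q$.

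The quantitative step---and the only place something nontrivial is used---is Korn's first inequality on $H^1_0(\Omega;\mathbb{V})$: $\|\grad\psi\|_0^2 \leq 2\|\sym\grad\psi\|_0^2$. Combined with $\|\grad\psi\|_0^2 = \|\sym\grad\psi\|_0^2 + \|\skw\grad\psi\|_0^2$, this forces $\|\skw\grad\psi\|_0 \leq \tfrac{1}{\sqrt 2}\|\grad\psi\|_0$, so $\|\grad\psi\|_0 \leq \tfrac{1}{\sqrt 2}\|q\|_0$ and hence $\|\tau\|_0^2 = \|q\|_0^2 - \|\grad\psi\|_0^2 \geq \tfrac12\|q\|_0^2$. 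Therefore $c(\tau,q) = \|\tau\|_0^2 \geq \tfrac{1}{\sqrt 2}\|\tau\|_{\div}\|q\|_0$, so the proposition holds with $\hat\gamma = 1/\sqrt 2$. The main obstacle is entirely concentrated in the Korn step: without it, the orthogonal decomposition alone would not preclude $\tau = 0$ (which would occur precisely when $q$ is a pure gradient), and it is Korn's inequality that keeps a skew-symmetric matrix field from lying entirely in $\grad(H^1_0)$.
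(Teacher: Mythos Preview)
Your proof is correct and takes a genuinely different route from the paper's. The paper constructs the test function explicitly as a curl: given $q$, it chooses $\nu\in H^1(\mathbb{M})$ with $\div\nu=\vec q$ (a bounded right inverse of the divergence), sets $\mu=\Xi^{-1}\nu$, and takes $\tau=\curl\mu$; the algebraic identity $\div\Xi\mu=2\vec\skw\curl\mu$ then yields $c(\curl\mu,q)=\|\vec q\|_0^2$ directly. Your argument instead subtracts off the $\grad(H^1_0)$ component of $q$ via a Poisson problem and invokes Korn's first inequality to show that what remains is a definite fraction of $q$. Your route is more elementary and produces an explicit, domain-independent constant $\hat\gamma=1/\sqrt{2}$, whereas the paper's constant depends on the norm of the divergence right inverse and hence on $\Omega$. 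What the paper's construction buys, however, is that the test function is visibly of the form $\curl\mu$ with $\mu\in H^1(\mathbb{M})$: this is precisely the template that the discrete stability proof of \cref{thm:lbbch} imitates (replacing $H^1$ by the finite element space $M_h$ and inserting a smoothed projection), so the continuous proof is written to foreshadow the finite-element argument. Your $\tau$ is divergence-free but not manifestly a curl of something with the regularity the later argument needs, so while it settles the proposition cleanly, it would not directly serve as a guide for the discrete version.
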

This split is a useful form of \cref{prop:lbba} because, as mentioned above,
\cref{prop:lbbb} is a vector-valued generalization of a well established
result, and because we know $\curl [H(\curl;\mathbb{M})]\subseteq
\ker(b,\Sigma;V)$.

We provide a brief sketch of a proof of \cref{prop:lbbc}, because it illustrates
a method used to prove the stability of finite element approximations of
\eqref{eq:hr}.  The proof uses the operators $\vec:\mathbb{K}\to\mathbb{V}$
and $\Xi:\mathbb{M}\to\mathbb{M}$ defined by
\[
  \begin{aligned}
    (\vec A) \times b := Ab, & &
    \Xi A := A^\tr - \trace(A) I,
  \end{aligned}
\]
which have the following identities for sufficiently smooth fields:
\begin{align}
  \Xi^{-1}A &= A^\tr - \half\trace(A)I, \\
  \div \skw A  &= - \curl \vec \skw A \label{eq:divcurl}, \\
  \div \Xi \mu &= 2 \vec \skw \curl \mu. \label{eq:vecxi}
\end{align}
We state the following proposition about these two operators.

\begin{prop}\label{prop:xibiject}
  $\Xi$ induces a bounded linear bijection of $H^1(\mathbb{M})$ onto
  itself.\qed
\end{prop}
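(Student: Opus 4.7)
The plan is to exploit that $\Xi$ is a pointwise (zero-order) linear map on the finite-dimensional matrix space $\mathbb{M}$, so that pointwise invertibility on $\mathbb{M}$ lifts automatically to a bicontinuous automorphism of any Sobolev space of $\mathbb{M}$-valued fields.

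First I would verify the pointwise bijectivity of $\Xi:\mathbb{M}\to\mathbb{M}$, with inverse as already quoted by the paper. The direct check is to compute $\trace(\Xi A) = \trace(A) - 3\trace(A) = -2\trace(A)$, so $\trace(A)$ is recoverable from $\Xi A$, and then $A^\tr$ and finally $A$ follow from the definition of $\Xi$. Equivalently, $\Xi A = 0$ forces $A^\tr = \trace(A)I$, which combined with $\trace(A) = 3\trace(A)$ forces $A = 0$, giving injectivity. Since $\mathbb{M}$ is finite-dimensional, both $\Xi$ and $\Xi^{-1}$ are bounded with respect to, say, the Frobenius norm.

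Second, because $\Xi$ has constant matrix coefficients and acts algebraically on $A(x)$ componentwise, distributional differentiation commutes with it, so $\partial_i(\Xi A) = \Xi(\partial_i A)$. Combined with the pointwise operator bound, this immediately yields $\|\Xi A\|_1 \leq \|\Xi\|_{\mathrm{op}} \|A\|_1$ for every $A\in H^1(\mathbb{M})$, and the same bound for $\Xi^{-1}$; the compositions $\Xi\Xi^{-1}$ and $\Xi^{-1}\Xi$ reduce to the identity pointwise and hence on $H^1(\mathbb{M})$. There is no real obstacle to the argument: the proposition is essentially the observation that an invertible constant-coefficient zero-order multiplier is an automorphism of every Sobolev space. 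The only dimension-specific ingredient is the nonvanishing of the trace coefficient $-2$, i.e.\ the $1-d$ factor specialized to $d=3$.
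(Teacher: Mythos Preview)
Your argument is correct: pointwise invertibility of the constant-coefficient linear map $\Xi$ on $\mathbb{M}$, together with the fact that it commutes with distributional derivatives, gives the bicontinuous bijection on $H^1(\mathbb{M})$. The paper offers no proof at all (the proposition is stated with an immediate \qedsymbol), so your write-up is a correct elaboration of what the author treats as evident.
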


\begin{prop}\label{prop:vecxi}
  $\Xi$ induces a bounded linear map of $H(\curl;\mathbb{M})$ into
  $H(\div;\mathbb{M})$.\qed
\end{prop}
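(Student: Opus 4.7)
The plan is to verify the two ingredients of the $H(\div;\mathbb{M})$ norm separately: boundedness of $\Xi\mu$ in $L^2$, and then boundedness of $\div\Xi\mu$ in $L^2$. The former is essentially trivial and the latter is where identity \eqref{eq:vecxi} does the work.

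First I would note that, because $\Xi: \mathbb{M}\to\mathbb{M}$ is a fixed linear map on a finite-dimensional space, there is a constant $C_\Xi$ such that $|\Xi M|\le C_\Xi |M|$ for every $M\in\mathbb{M}$. Applying this pointwise gives $\|\Xi\mu\|_0 \le C_\Xi \|\mu\|_0 \le C_\Xi \|\mu\|_{\curl}$ for every $\mu \in H(\curl;\mathbb{M})$, so in particular $\Xi\mu\in L^2(\mathbb{M})$.

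Next I would invoke identity \eqref{eq:vecxi} to compute $\div\Xi\mu$. The identity is stated for smooth $\mu$, but both sides make sense as distributions and agree on the dense subset $C^\infty(\bar\Omega;\mathbb{M}) \subset H(\curl;\mathbb{M})$, so by continuity of $\div$ and $\curl$ as maps into $\mathcal{D}'(\Omega)$ the identity extends to all $\mu\in H(\curl;\mathbb{M})$ as a distributional equality. Since $\mu\in H(\curl;\mathbb{M})$ gives $\curl\mu\in L^2(\mathbb{M})$, and $\vec$ and $\skw$ are bounded linear maps on $\mathbb{M}$, we get $2\vec\skw\curl\mu\in L^2(\mathbb{V})$ with
\[
  \|2\vec\skw\curl\mu\|_0 \le C\,\|\curl\mu\|_0 \le C\,\|\mu\|_{\curl}.
\]
Hence $\div\Xi\mu\in L^2(\mathbb{V})$ and inherits the same bound.

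Combining the two estimates yields $\|\Xi\mu\|_{\div}^2 = \|\Xi\mu\|_0^2 + \|\div\Xi\mu\|_0^2 \le C'\|\mu\|_{\curl}^2$, which is the claimed continuity. Linearity of $\Xi$ on $H(\curl;\mathbb{M})$ is inherited pointwise from its linearity on $\mathbb{M}$. The only step that requires any care is the density argument used to promote \eqref{eq:vecxi} from smooth fields to all of $H(\curl;\mathbb{M})$, which I expect to be the main (very mild) obstacle; everything else is bookkeeping.
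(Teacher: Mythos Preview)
Your argument is correct. The paper does not actually supply a proof of this proposition (it is marked with \qed{} as immediate), but the intended justification is precisely the one you give: identity \eqref{eq:vecxi} converts $\div\Xi\mu$ into a bounded pointwise operation on $\curl\mu$, and the $L^2$ bound on $\Xi\mu$ itself is trivial.
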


\begin{prop}\label{prop:divcurl}
  $\vec$ induces an isomorphism between $H(\div;\mathbb{K})$ and
  $H(\curl;\mathbb{V})$.\qed
\end{prop}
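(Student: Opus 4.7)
The plan is to combine identity \eqref{eq:divcurl} with the observation that $\vec$ is a pointwise linear bijection between the three-dimensional fibers $\mathbb{K}$ and $\mathbb{V}$, and to transfer the $H(\div)$-structure to the $H(\curl)$-structure through it.

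First, since $\vec:\mathbb{K}\to\mathbb{V}$ is a linear bijection of fibers acting componentwise, it extends to a bounded invertible map $L^2(\mathbb{K})\to L^2(\mathbb{V})$, and there exist constants $c_1,c_2>0$ with $c_1\|A\|_0 \leq \|\vec A\|_0 \leq c_2\|A\|_0$ for every $A\in L^2(\mathbb{K})$. Next I would promote identity \eqref{eq:divcurl} from smooth fields to distributions: for $A\in C^\infty_c(\mathbb{K})$ the identity $\div A = -\curl\vec A$ holds pointwise, and testing against a smooth compactly supported vector field and moving the derivatives onto the test shows that the identity persists when $\div$ and $\curl$ are interpreted as vector-valued distributions acting on $L^2(\mathbb{K})$ and $L^2(\mathbb{V})$ respectively.

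Once the distributional identity is in hand, the conclusion is immediate. If $A\in H(\div;\mathbb{K})$, then $\curl\vec A = -\div A \in L^2(\mathbb{V})$, so $\vec A \in H(\curl;\mathbb{V})$ with $\|\vec A\|_{\curl} \leq C\|A\|_{\div}$; applying $\vec^{-1}$ and the same identity in reverse gives the matching bound and hence the isomorphism. The hard part---really the only step that requires any care---is verifying that \eqref{eq:divcurl} passes to the distributional setting, since every other step follows directly from the pointwise structure of $\vec$.
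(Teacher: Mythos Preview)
Your argument is correct: the pointwise bijection $\vec:\mathbb{K}\to\mathbb{V}$ lifts to an $L^2$ isomorphism, identity~\eqref{eq:divcurl} extends by density (or duality) to $L^2$ fields with $\div$ and $\curl$ read distributionally, and the two-sided norm bound follows. The paper itself offers no proof---the trailing \verb|\qed| signals that the result is taken as immediate---so your write-up is already more detailed than what appears there.
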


\begin{proof}[Proof of \cref{prop:lbbc}]\label{prf:lbbc}
  Let $q\in Q$ be given.  There exists  $\nu \in H^1(\mathbb{M})$ and a
  constant $C_1$ independent of $q$ such that $\div \nu = \vec q$ such that
  $\|\nu\|_1 \leq C_1 \|\vec q\|_0$
  \cite[Corollary~2.4]{GiraultRaviart1986}.  By \cref{prop:xibiject},
  $\mu := \Xi^{-1} \nu \in H^1(\mathbb{M}) \subset
  H(\curl;\mathbb{M})$ and thus $\curl \mu \in \ker(b,\Sigma;V)$.
  \Cref{prop:xibiject} also implies there exists $C_2$ such that
  \[
    \|\curl \mu \|_{\div} = \|\curl \mu\|_0 \leq C_2 \|\nu\|_1 \leq C_1 C_2
    \|\vec q\|_0.
  \]
  By \eqref{eq:vecxi},
  \[
    c(\curl \mu, q) =
    \int_{\Omega} \skw \curl \mu : q\ dx
    =
    \int_{\Omega} 2 \vec \skw \curl \mu \cdot \vec q\ dx =
    \int_{\Omega} \div \mu \cdot \vec q\ dx,
  \]
  and thus
  \[
    c(\curl \mu, q)  = \| \vec q\|_0^2 \geq \frac{1}{\sqrt{2}C_1 C_2} \|\curl
    \mu \|_{\div}\|q\|_0.
  \]
\end{proof}

The relationship between $\vec$ and $\Xi$ in \eqref{eq:vecxi} is fundamental
to the embedding of the elasticity complex into the de~Rham complex via the
Bernstein-Gelfand-Gelfand (BGG) resolution of the rigid-body motions.  We
refer the reader interested in more background on this result to the works of
\textcite{Eastwood2000, ArnoldFalkWinther2007}.
The latter authors use the BGG resolution to design stable
discretizations of \eqref{eq:hr} by choosing two finite element de~Rham
subcomplexes, one for $\mathbb{V}$ and one for $\mathbb{K}$, such that the
steps in the proof of \cref{prop:lbbc} above can be repeated at the discrete
level.  In particular, their method requires that there exists an approximate
operator $\Xi_h^{-1}$ mapping from the $H(\div;\mathbb{M})$ subspace in the
complex for $\mathbb{K}$ (the proxy for $\mathbb{K}$-valued 2-forms) into the
$H(\curl;\mathbb{M})$ subspace in the complex for $\mathbb{V}$ (the proxy
for $\mathbb{V}$-valued 1-forms) for which a discrete version of
\cref{prop:vecxi} is true.  Using this property, the authors prove the
stability of the following triplet of discrete subspaces $\Sigma_h$, $V_h$,
and $Q_h$ (alongside which we give the finite element exterior calculus
notation \cite{ArnoldFalkWinther2006} for the equivalent spaces of
differential forms):
%
\[
  \begin{aligned}
    \Sigma_h &:= \mathcal{P}_{r+1} (\mathcal{T}_h;\mathbb{M})
    \cap \Sigma &\sim& \mathcal{P}_{r+1} \Lambda^2
    (\mathcal{T}_h;\mathbb{V}),\\ V_h &:= \mathcal{P}_r
    (\mathcal{T}_h;\mathbb{V}) \cap V &\sim& \mathcal{P}_r \Lambda^3
    (\mathcal{T}_h;\mathbb{V}),\\ Q_h &:= \mathcal{P}_r
    (\mathcal{T}_h;\mathbb{K}) \cap Q &\sim& \mathcal{P}_r \Lambda^3
    (\mathcal{T}_h;\mathbb{K}). 
  \end{aligned}
\]
%
Here $\mathcal{T}_h$ is a tetrahedral mesh of the domain $\Omega$, i.e.\ a
simplicial complex.  We will denote by $\Delta_k(\mathcal{T}_h)$ the
$k$-dimensional simplices in $\mathcal{T}_h$, by $h_S$ the diameter of
simplex $S$, and $h := \max_{S \in \mathcal{T}_h} h_S$.

$\mathcal{P}_r(\mathcal{T}_h;\mathbb{X})$ is the space of functions that are
equal to $\mathbb{X}$-valued polynomials of degree at most $r$ when
restricted to each $T\in \Delta_3(\mathcal{T}_h)$, and $\mathcal{P}_r
\Lambda^k(\mathcal{T}_h;\mathbb{Y})$ are the same functions interpreted
instead as $\mathbb{Y}$-valued $k$-forms. 

The triplet $\Sigma_h \times V_h \times Q_h$ is by no means the only stable
discretization of \eqref{eq:hr}: see \cite[\S~1]{ArnoldFalkWinther2007} for a
survey.

\subsection{The proposed finite element}

In light of the $H(\div;\mathbb{K})\sim H(\curl;\mathbb{V})$ isomorphism, we
propose the triplet $\Sigma_h \times V_h \times \tilde Q_h$  for the
discretization of \eqref{eq:hr}, where
%
\[
  \begin{aligned}
    \tilde Q_h &:= \mathcal{P}_{r+1} (\mathcal{T}_h;\mathbb{K}) \cap
    H(\div;\mathbb{K})
    \sim \vec{}^{-1} [\mathcal{P}_{r+1} \Lambda^1 (\mathcal{T}_h;\mathbb{R})].
  \end{aligned}
\]
%
The stress and velocity spaces are the same as in the element of Arnold, Falk,
and Winther, but the multiplier space $\tilde Q_h$ is now
$H(\div;\mathbb{K})$-conforming and has the same order as $\Sigma_h$.  The
discrete version of \cref{prop:lbbb} is known to hold when restricted to
$\Sigma_h \times V_h$, so to prove the stability of our triplet in discretizing
\eqref{eq:hr} it only remains to prove a discrete version of
\cref{prop:lbbc}, which we do in \cref{sec:proof}.

In the lowest order case, our finite element can be efficiently reduced to a
generalized displacement method, in fact to a finite volume scheme with
pointwise second-order convergence at cell centers.  We discuss this aspect of
the finite element in \cref{sec:nodal}.

\section{Stability and convergence}
\label{sec:proof}

In this section we prove the stability and convergence properties of $\Sigma_h
\times V_h \times \tilde Q_h$ as a discrete setting for \eqref{eq:hr}.
\Cref{thm:lbbch} is the only component of the proof of stability that has not
already been established.

\begin{thm}[$c$ is stable on the kernel of $b$ for
  $\Sigma_h \times V_h \times \tilde{Q}_h$]\label{thm:lbbch}%
  Let $\mathcal{T}_h$ be a conformal tetrahedral mesh of $\Omega$ that is
  shape regular, i.e.\ there exists $C_{\text{mesh}}$ such that $h_T^3 \leq
  C_{\mathop{\mathrm{mesh}}}|\mathop{\mathrm{vol}} T|$, $T\in \mathcal{T}_h$.
  Then there exists
  $\gamma(\Omega,C_{\mathop{\mathrm{mesh}}}) > 0$ independent
  of $h$ such that
  \begin{equation}\label{eq:infsupch}
    \inf_{q_h\in \tilde Q_h} \sup_{\tau_h \in \ker(b,\Sigma_h;V_h)}
    c(\tau_h,q_h) \geq \gamma \|\tau_h\|_{\div}\|q_h\|_0.
  \end{equation}%
\end{thm}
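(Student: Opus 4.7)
The plan is to adapt the continuous proof of \cref{prop:lbbc} to the discrete setting. Given $q_h \in \tilde Q_h$, the continuous argument already yields $\mu = \Xi^{-1}\nu \in H^1(\mathbb{M})$ with $\div\Xi\mu = \vec q_h$, $\|\mu\|_1 \leq C\|q_h\|_0$, and $c(\curl\mu, q_h) = \|\vec q_h\|_0^2 = \tfrac12\|q_h\|_0^2$ via identity~\eqref{eq:vecxi}. I would take a discrete test of the form $\tau_h := \curl\mu_h$ with $\mu_h$ in the row-wise Nédélec second-kind space $M_h^{\curl} := \mathcal{P}_{r+2}(\mathcal{T}_h;\mathbb{M}) \cap H(\curl;\mathbb{M})$. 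By the discrete de~Rham property, any such $\curl\mu_h$ lies in $\mathcal{P}_{r+1}(\mathcal{T}_h;\mathbb{M}) \cap H(\div;\mathbb{M}) = \Sigma_h$ and is divergence-free, placing $\tau_h \in \ker(b,\Sigma_h;V_h)$ regardless of the specific $\mu_h$.

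For the map $\mu \mapsto \mu_h$ I would invoke a bounded commuting projection $\Pi_h^{\curl}: H^1(\mathbb{M}) \to M_h^{\curl}$ paired with a bounded $\Pi_h^{\div}: L^2(\mathbb{M}) \to \Sigma_h$ satisfying $\curl\Pi_h^{\curl} = \Pi_h^{\div}\curl$, in the style of Schöberl or Christiansen--Winther. Setting $\mu_h := \Pi_h^{\curl}\mu$ yields $\tau_h = \Pi_h^{\div}\curl\mu$, so that the continuity estimate $\|\tau_h\|_{\div} = \|\tau_h\|_0 \leq C\|\curl\mu\|_0 \leq C\|\mu\|_1 \leq C'\|q_h\|_0$ follows, with constants depending only on $\Omega$ and $C_{\mathrm{mesh}}$.

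The main obstacle is the lower bound $c(\tau_h,q_h) \geq \gamma\|q_h\|_0^2$. Decomposing
\[
  c(\tau_h,q_h) = c(\curl\mu,q_h) - \int(\curl\mu - \Pi_h^{\div}\curl\mu):q_h\,dx = \tfrac12\|q_h\|_0^2 - \int \eta_h : q_h\,dx,
\]
the error $\eta_h := \curl\mu - \Pi_h^{\div}\curl\mu$ is only controlled by $\|q_h\|_0$ in $L^2$, so a direct Cauchy--Schwarz gives only an $O(\|q_h\|_0^2)$ bound. My plan is to apply \eqref{eq:vecxi} piecewise to rewrite the error as $\int \div_h\Xi(\mu - \Pi_h^{\curl}\mu) \cdot \vec q_h\,dx$ and then integrate by parts element-by-element. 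By \cref{prop:vecxi}, $\Xi(\mu - \Pi_h^{\curl}\mu) \in H(\div;\mathbb{M})$, so its normal trace is continuous across interior faces, while $\vec q_h \in H(\curl;\mathbb{V})$ has continuous tangential trace; the interior face contributions then reduce to pairings of normal components controllable by face approximation estimates for $\Pi_h^{\curl}$. The residual volume term $\int \Xi(\mu - \Pi_h^{\curl}\mu):\grad_h\vec q_h\,dx$ is the hardest part: a combination of approximation and inverse estimates does not gain a factor of $h$, so the crucial step I expect to need is promoting $\Pi_h^{\curl}$ to a Fortin-type projection that exactly preserves $c(\cdot,q_h)$ on $\tilde Q_h$ --- equivalently, constructing a local correction in $M_h^{\curl}$ that annihilates this volume contribution, with boundedness tracked through $C_{\mathrm{mesh}}$.
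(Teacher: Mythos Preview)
Your setup is sound and matches the paper's starting point: you correctly take $\tau_h=\curl\mu_h$ with $\mu_h\in M_h=\mathcal{P}_{r+2}(\mathcal{T}_h;\mathbb{M})\cap H(\curl;\mathbb{M})$, you invoke the continuous construction $\mu=\Xi^{-1}\nu$ with $c(\curl\mu,q_h)=\tfrac12\|q_h\|_0^2$, and you identify exactly the right obstacle---the perturbation $c(\curl(\mu-\mu_h),q_h)$ is only $O(\|q_h\|_0^2)$ after approximation-plus-inverse estimates, so it cannot be absorbed. Your integration-by-parts observation is also correct and is used in the paper (it is the content of \cref{lem:czeronorm} and the norms $\|\cdot\|_{0,h}$, $\|\cdot\|_{1,h}$). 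But the proposal stops at the point where the real work begins: you say the ``crucial step'' is a Fortin-type local correction in $M_h$ that annihilates the volume residual, and you do not construct it. That construction is the heart of the theorem.

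The paper supplies this missing piece by the macroelement method of Stenberg rather than by building a single Fortin operator. Concretely, for each vertex patch $\mathcal{T}_h^v$ one shows (\cref{lem:macroelement}) that if $c(\curl\mu_h,q_h)=0$ for all $\mu_h$ in a locally supported subspace $\mathring{M}_h^v\subset M_h$, then $q_h$ is constant on $\mathcal{T}_h^v$; the key test functions are $\mu_h^e=\lambda_v\lambda_{v_e}\,\Xi^{-1}(\grad\vec q_h\,t_et_e^\tr)$ for each edge $e$ through $v$, which force $\partial_{t_e}\vec q_h=0$. Equivalence of finite-dimensional norms then gives a local inf-sup in the mesh-dependent seminorm $|q_h|_{1,h,\mathcal{T}_h^v}$ (\cref{lem:beta}), and summing over vertices yields a $\mu_h$ with $c(\curl\mu_h,q_h)\gtrsim\|q_h\|_{1,h}^2$. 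This is combined with your projected continuous $\Pi_h^1\mu^*$, whose defect is bounded \emph{above} by $\|q_h\|_{1,h}$ via the same integration-by-parts identity, and the two estimates are merged by taking, with $t=\|q_h\|_{1,h}/\|q_h\|_0$, $\min_{t>0}\max\{C_1t,\;C_3-C_4t\}>0$. In other words, the paper does not make the projection Fortin-exact; it instead produces a second, purely discrete test function that controls precisely the quantity your error term sees.
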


The proof of \cref{thm:lbbch} is similar to the proof of
\cref{prop:lbbc} in that we will use only a subspace of $\ker(b,\Sigma_h;V_h)$
defined by the curl of another space,
\[ M_h := \mathcal{P}_{r + 2} (\mathcal{T}_h;\mathbb{M}) \cap
  H(\curl;\mathbb{M}) \sim \mathcal{P}_{r + 2} \Lambda^1
  (\mathcal{T}_h;\mathbb{V}).
\]
As $M_h \stackrel{\curl}{\to} \Sigma_h \stackrel{\div}{\to} V_h \to 0$ is
a subcomplex of the de~Rham complex, 
$\curl[M_h]\subseteq \ker(b,\Sigma_h;V_h)$.

The proof takes the form of the macroelement method of
\textcite{Stenberg1984,Stenberg1990}.  The macroelement method was developed
to prove the stability of $b$ on $(H_0^1(\mathbb{V}) \times
L_0^2(\mathbb{R}))$-conforming finite elements, but the structure of the method
requires few changes to apply to $M_h \times \tilde Q_h$.

First we define mesh dependent norms and seminorms:
\[
  \begin{aligned}
    \|q\|_{1,h}^2 &:=
    \sum_{T\in\Delta_3(\mathcal{T}_h)} h_T^2 |\vec q|_{1,T}^2 + 
    \sum_{f \in \Delta_2(\mathcal{T})} h_f \|\jump{\vec q}\|_{0,f}^2,\\
    \|\mu\|_{0,h}^2 &:= \sum_{T\in\Delta_3(\mathcal{T}_h)} h_T^{-2}
    \|\Xi \mu\|_{0,T}^2 + 
    \sum_{f \in \Delta_2(\Xi \mathcal{T}_h)} h_f^{-1} \|(\Xi \mu)n\|_{0,f}^2.
  \end{aligned}
\]
Here $\jump{\omega} := \omega^- - \omega^+$ is the jump of $\omega$ across $f =
T^-\cap T^+$ where $n$ points from $T^-$ to $T^+$ (and $\omega^+ := 0$ if
$f\subset \partial \Omega$).

%

\begin{prop}\label{lem:czeronorm}
  $|c(\mu,q_h)| \leq \|\mu\|_{0,h}\|q_h\|_{1,h}$, $\mu\in
  H(\curl;\mathbb{M})$, $q_h \in \tilde Q_h$.
\end{prop}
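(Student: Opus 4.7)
Since $q_h \in \tilde Q_h$ is skew-symmetric, $\trace q_h = 0$ and $q_h^\tr = -q_h$; hence $\Xi\mu \frob q_h = (\mu^\tr - \trace\mu\, I)\frob q_h = \mu^\tr\frob q_h = -\mu\frob q_h$. This rewrites the bilinear form as
\[
  c(\mu, q_h) = \int_\Omega \skw\mu \frob q_h\, dx = \int_\Omega \mu \frob q_h\, dx = -\int_\Omega \Xi\mu \frob q_h\, dx,
\]
reducing the task to bounding the rightmost integral. The structural fact driving the rest of the argument is that $\Xi\mu \in H(\div;\mathbb{M})$ (\cref{prop:vecxi}), so $(\Xi\mu)\, n$ is single-valued across element faces and element-wise integration by parts is available.

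My plan is to split $q_h|_T = \bar q_T + (q_h|_T - \bar q_T)$ on each $T \in \Delta_3(\mathcal{T}_h)$, where $\bar q_T \in \mathbb{K}$ is the constant element average. The fluctuating part is handled by Cauchy--Schwarz together with the Poincar\'e--Wirtinger estimate $\|q_h - \bar q_T\|_{0,T} \lesssim h_T\,|\vec q_h|_{1,T}$; a weighted summation with factors $h_T^{-1}$ and $h_T^{2}$ reproduces the element-bulk piece of $\|\mu\|_{0,h}\|q_h\|_{1,h}$. For the constant part I write $\bar q_T = \grad v_T$ with $v_T(x) := \bar q_T(x - x_T)$, a linear vector field with $\int_T v_T = 0$, and integrate by parts on $T$:
\[
  \int_T \Xi\mu \frob \bar q_T = -\int_T (\div \Xi\mu)\cdot v_T + \int_{\partial T}(\Xi\mu\, n)\cdot v_T.
\]
Summed over elements, the face continuity of $(\Xi\mu)\, n$ collapses the boundary contributions into a face sum pairing $(\Xi\mu)\, n$ with jumps of $v_T$, and those jumps are controlled by $\jump{\vec q_h}$ on the face (together with local mesh factors absorbed into the shape-regularity constant); a weighted Cauchy--Schwarz with $h_f^{\pm 1/2}$ then matches the face part of $\|\mu\|_{0,h}\|q_h\|_{1,h}$.

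The main obstacle I anticipate is the bulk residue $-\int_T (\div \Xi\mu)\cdot v_T$: since $\div \Xi\mu = 2\vec\skw\curl\mu$ is not controlled by $\|\mu\|_{0,h}$, this term cannot appear in the final bound. Handling it will rely on the zero-mean property $\int_T v_T = 0$, which lets me replace $\div\Xi\mu$ by its mean-free part on $T$, and then a compensating integration by parts (using $\grad v_T = \bar q_T$ constant on $T$) that recasts this mean-free contribution purely in terms of boundary data of $\Xi\mu$, so the final estimate involves only the quantities appearing in $\|\mu\|_{0,h}$ and $\|q_h\|_{1,h}$.
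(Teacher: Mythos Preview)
You have been tripped up by a typo in the paper's statement: the proposition is really about $c(\curl\mu,q_h)$, not $c(\mu,q_h)$. This is evident from the paper's own proof, which begins with ``$c(\curl\mu,q_h)=\ldots$'', and from the only place the proposition is invoked (inequality \eqref{eq:ctrue} in the proof of \cref{thm:lbbch}), where it is applied to $c(\curl(\Pi_h^1\mu^*-\mu^*),q_h)$. With the $\curl$ in place, the argument is a two-line computation: the identity \eqref{eq:vecxi} gives
\[
  c(\curl\mu,q_h)=\int_\Omega 2\,\vecskw\curl\mu\cdot\vec q_h\,dx
  =\int_\Omega \div(\Xi\mu)\cdot\vec q_h\,dx,
\]
and elementwise integration by parts (using that $\Xi\mu\in H(\div;\mathbb{M})$ has single-valued normal trace) yields exactly
\[
  c(\curl\mu,q_h)=-\sum_{T}\int_T\Xi\mu:\grad\vec q_h\,dx
  +\sum_f\int_f\jump{\vec q_h}^\tr(\Xi\mu)n\,ds,
\]
after which Cauchy--Schwarz with the weights $h_T^{\pm1}$, $h_f^{\pm1/2}$ gives the bound with constant~$1$. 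No decomposition of $q_h$, no Poincar\'e--Wirtinger, no auxiliary $v_T$ is needed.

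Your plan, read literally as an attack on $|c(\mu,q_h)|$, has a genuine gap at exactly the point you flagged. After writing $\int_T\Xi\mu:\bar q_T=-\int_T\div(\Xi\mu)\cdot v_T+\int_{\partial T}(\Xi\mu\,n)\cdot v_T$, the bulk residue $\int_T\div(\Xi\mu)\cdot v_T$ cannot be recast ``purely in terms of boundary data of $\Xi\mu$''. The zero-mean property of $v_T$ lets you subtract any constant from $\div\Xi\mu$, but the mean-free part of $\div\Xi\mu$ is still an interior quantity not controlled by $\|\mu\|_{0,h}$, and there is no further integration by parts available: $v_T=\bar q_T(x-x_T)$ is a rigid rotation, not a gradient, so you cannot shift the derivative back onto $v_T$. (Incidentally, the fluctuation step via Poincar\'e--Wirtinger would also destroy the constant~$1$ and introduce a shape-regularity constant, whereas the paper's identity is exact.) Once you replace $\mu$ by $\curl\mu$ in the statement, your opening observation that $\Xi\mu\in H(\div;\mathbb{M})$ is the only structural ingredient needed, and the rest of the machinery can be discarded.
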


\begin{proof}
  Using \eqref{eq:divcurl} and integration by parts,
  \begin{equation}\label{eq:intbyparts}
    \begin{aligned}
      c(\curl \mu, q_h)
      &=
      c(\div \Xi \mu, \vec q_h)
      \\
      &= -\sum_{T\in \Delta_3(\mathcal{T}_h)} (\Xi \mu, \grad \vec q_h)_T
      +
      \sum_{f\in \Delta_2(\mathcal{T}_h)}
      \int_{f} \jump{\vec q_h}^\tr  (\Xi\mu) n\ ds,
    \end{aligned}
  \end{equation}
  so the bound is an application of the Cauchy-Schwarz inequality.
  %
\end{proof}

For each vertex $v\in \Delta_0(\mathcal{T}_h)$ we now define $\mathcal{T}_h^v$
to be the submesh of tetrahedra adjacent to $v$.  The shape regularity
constant $C_{\mesh}$ implies the existence of an upper bound on the number of
tetrahedra in $\mathcal{T}_h^v$, $|\Delta_3(\mathcal{T}_h^v)| \leq
k(C_{\mesh})$, so the simplicial complex topology of $\mathcal{T}_h^v$ is the
same as one of a finite dimensional set of reference macroelements,
$\{\mathcal{M}_h\}$.

We divide the facets of $\mathcal{T}_h^v$ by whether they are adjacent to $v$
or not,
\[
  \begin{aligned}
    \Gamma_i^v &:= \{f \in \Delta_2(\mathcal{T}_h^v):v \in \overline{f}\},
    &&
    \Gamma_o^v &:= \Delta_2(\mathcal{T}_h^v) \backslash \Gamma_i^v,
  \end{aligned}
\]
and we associate with $\mathcal{T}_h^v$ a subspace of $M_h$ that extends by zero
away from it,
\[
  \mathring{M}_h^v := \{ \mu_h \in M_h : \mu_h(x) = 0,\ x\in \Omega
    \backslash \overline{\mathcal{T}_h^v}; (\Xi\mu_h)n = 0, f\in \Gamma_o^v\}.
\]
$\mathring{M}_h^v$ is so defined that for any $q_h\in \tilde
Q_h$ the support of $c(\mu_h, q_h)$ is just the interior of $\mathcal{T}_h^v$,
\begin{equation}\label{eq:ctv}
  \begin{aligned}
    c(\curl \mu_h, q_h)
    &=
    -
    \sum_{T \in \Delta_3(\mathcal{T}_h^v)}
    \int_T \Xi \mu_h : \grad q_h\ dx
    +
    \sum_{f\in \Gamma_i^v}
    \int_{f} \jump{\vec q_h}^\tr (\Xi\mu) n\ ds, & &
    \mu_h\in\mathring{M}_h^v.
  \end{aligned}
\end{equation}

We restrict $\|\cdot \|_{1,h}$ to $\mathcal{T}_h^v$ using only the interior
facets,
\[
  |q|_{1,h,\mathcal{T}_h^v}^2 :=
  \sum_{T\in\Delta_3(\mathcal{T}_h^v)} h_T^2 |\vec q|_{1,T}^2 + 
  \sum_{f \in \Gamma_i^v} h_f \|\jump{\vec q}\|_{0,f}^2.
\]

\begin{prop}\label{prop:hnorm}
  If $|q_h|_{1,h,\mathcal{T}_h^v} = 0$, then $q_h$ is constant in
  $\mathcal{T}_h^v$.\qed
\end{prop}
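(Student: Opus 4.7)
My plan is to exploit that $|\cdot|_{1,h,\mathcal{T}_h^v}^2$ is a sum of non-negative terms, so its vanishing forces each summand to vanish separately. The volume terms give $|\vec q_h|_{1,T} = 0$ for every $T\in \Delta_3(\mathcal{T}_h^v)$, which says that $\vec q_h$ is an (\emph{a priori} tetrahedron-dependent) constant vector on each tetrahedron of the patch. The facet terms give $\jump{\vec q_h}|_f = 0$ for every $f\in \Gamma_i^v$, which says that these tetrahedron-wise constants agree across every facet of the patch that touches $v$.

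The key combinatorial observation I need is that every facet shared by two tetrahedra of $\mathcal{T}_h^v$ necessarily lies in $\Gamma_i^v$. Indeed, if $f = T\cap T'$ with $T,T'\in \Delta_3(\mathcal{T}_h^v)$ and $v\notin \overline{f}$, then $v$ would be the unique apex opposite $f$ in each of $T$ and $T'$, forcing $T = \mathrm{conv}(\overline{f}\cup\{v\}) = T'$, a contradiction. Hence $v\in\overline{f}$, and $f\in \Gamma_i^v$, so $\vec q_h$ has zero jump across every internal facet of the star.

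Since the vertex star $\mathcal{T}_h^v$ of a conformal tetrahedral mesh is connected via its shared facets (the dual graph of the star corresponds to the link of $v$, which is a connected triangulated surface), piecewise constancy of $\vec q_h$ combined with zero jumps across every internal facet of the star forces $\vec q_h$ to take a single constant value throughout $\mathcal{T}_h^v$. Because $\vec:\mathbb{K}\to \mathbb{V}$ is a fixed linear isomorphism, $q_h = \vec^{-1}(\vec q_h)$ is then likewise constant on the patch.

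I do not anticipate a serious obstacle here: the combinatorial fact about the star is elementary, and if $v\in\partial\Omega$ so that some $f\in\Gamma_i^v$ lies in $\partial\Omega$, the jump convention $\omega^+ := 0$ merely pins the common constant value of $\vec q_h$ to zero, which is still a constant in the sense claimed. The mild care needed for boundary vertices is the only subtlety worth flagging.
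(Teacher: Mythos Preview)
Your argument is correct. The paper itself provides no proof of this proposition (the \qed immediately following the statement indicates it is left as an elementary observation), and your write-up is exactly the natural elaboration: each summand vanishes, the combinatorial fact that any internal facet of the vertex star must contain $v$ (and hence lie in $\Gamma_i^v$), and facet-connectivity of the star via its link. Your handling of the boundary case is also correct and worth keeping, since the jump convention does force the constant to be zero there.
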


\begin{prop}\label{prop:normequiv}
  There exists $C$ such that
      $\|q_h\|_{1,h}^2 \leq \sum_{v\in \Delta_0(\mathcal{T}_h)}
      |q_h|_{1,h,\mathcal{T}_h^v}^2 \leq C\|q_h\|_{1,h}^2$.\qed
\end{prop}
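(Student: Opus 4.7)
The plan is to prove \cref{prop:normequiv} by a direct double-counting argument, exchanging the order of summation between vertices and mesh entities. Both $|q_h|_{1,h,\mathcal{T}_h^v}^2$ and $\|q_h\|_{1,h}^2$ are just sums of local terms attached to tetrahedra and to facets, so it suffices to rewrite $\sum_v |q_h|_{1,h,\mathcal{T}_h^v}^2$ by swapping the outer sum over $v\in\Delta_0(\mathcal{T}_h)$ with the inner sums over $T\in\Delta_3(\mathcal{T}_h^v)$ and $f\in\Gamma_i^v$, then count, for each $T$ and each $f$, the number of vertices $v$ that produce a contribution.

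The key observation is combinatorial: a tetrahedron $T$ belongs to $\mathcal{T}_h^v$ precisely when $v$ is one of its four vertices, and a facet $f\in\Delta_2(\mathcal{T}_h)$ belongs to $\Gamma_i^v$ precisely when $v$ is one of its three vertices. Both multiplicities are uniform across interior and boundary facets, since the jump $\jump{\vec q}$ and the weight $h_f$ are defined identically in both norms. Exchanging the sums then yields
\[
  \sum_{v\in\Delta_0(\mathcal{T}_h)} |q_h|_{1,h,\mathcal{T}_h^v}^2
  = 4 \sum_{T\in\Delta_3(\mathcal{T}_h)} h_T^2 |\vec q_h|_{1,T}^2
  + 3 \sum_{f\in\Delta_2(\mathcal{T}_h)} h_f \|\jump{\vec q_h}\|_{0,f}^2.
\]
The lower bound in \cref{prop:normequiv} then follows immediately because both coefficients on the right are at least $1$, and the upper bound holds with $C = 4$.

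I do not anticipate any real obstacle: no scaling, inverse estimate, or mesh-regularity argument is required, which is consistent with the fact that the constants in the proposition are absolute and do not depend on $C_{\mesh}$. The only minor care is to verify that the boundary convention $\vec q^+ := 0$ used in $\|\cdot\|_{1,h}$ agrees with the restriction used in $|\cdot|_{1,h,\mathcal{T}_h^v}$, so that boundary facets are counted with the same multiplicity $3$ as interior ones.
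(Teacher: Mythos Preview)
Your argument is correct: exchanging the order of summation and counting, for each tetrahedron $T$ and each facet $f$, the number of vertices $v$ for which $T\in\Delta_3(\mathcal{T}_h^v)$ (exactly $4$) and $f\in\Gamma_i^v$ (exactly $3$, since $f\in\Gamma_i^v$ iff $v$ is a vertex of $f$) gives the identity you wrote, and the two inequalities with $C=4$ follow. The paper offers no proof for this proposition (it is marked \qed\ without argument), so your double-counting is precisely the routine verification the author had in mind.
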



Nothing thus far has involved details of the spaces $M_h$ and $\tilde Q_h$.
We will use their properties for one key lemma.

\begin{lem}\label{lem:macroelement}
  If $q_h\in \ker(c,\tilde Q_h;\mathring{M}_h^v)$, then $q_h$ is constant in
  $\mathcal{T}_h^v$.
\end{lem}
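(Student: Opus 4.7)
The plan is to deduce that $|q_h|_{1,h,\mathcal{T}_h^v} = 0$, at which point \cref{prop:hnorm} gives that $q_h$ is constant on $\mathcal{T}_h^v$. Shape regularity bounds the combinatorial type and shape parameters of $\mathcal{T}_h^v$ to a finite list crossed with a compact parameter set; both the kernel condition and the semi-norm $|\cdot|_{1,h,\mathcal{T}_h^v}$ scale homogeneously under uniform dilation. After rescaling it therefore suffices to work on a fixed reference macroelement $\hat{\mathcal{M}}$ with $h \sim 1$, on which the spaces $\mathring{M}_h^v$ and $\tilde Q_h$ are finite-dimensional. By \eqref{eq:ctv} the hypothesis reads
\[
  \sum_{T\in\Delta_3(\hat{\mathcal M})} \int_T \Xi \mu_h \frob \grad \vec q_h\, dx
  =
  \sum_{f\in \Gamma_i^v} \int_f \jump{\vec q_h}^\tr (\Xi \mu_h) n\, ds,
  \qquad \forall \mu_h \in \mathring{M}_h^v,
\]
and I would drive both sides to zero separately using localized test functions.

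For the volume side, for each fixed $T$ I would construct $\mu_h \in \mathring M_h^v$ supported in $\overline T$, with tangential trace of $\mu_h$ vanishing on $\partial T$ and $(\Xi \mu_h) n = 0$ on $\partial T$. Because $\mathcal{P}_{r+2}(T;\mathbb{M})$ supplies polynomial bubble freedom and $\Xi$ is a bijection on polynomial $H^1$ by \cref{prop:xibiject}, the resulting images $\Xi \mu_h$ span a subspace rich enough to pair non-degenerately with $\grad \vec q_h \in \mathcal{P}_r(T;\mathbb{M})$, forcing $\grad \vec q_h = 0$ on each $T$. The hypothesis then reduces to the facet sum, and by \cref{prop:divcurl} the field $\vec q_h$ is $H(\curl;\mathbb{V})$-conforming, so $\jump{\vec q_h}$ is parallel to the facet normal on each $f \in \Gamma_i^v$. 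For each such $f$ I would then construct $\mu_h \in \mathring M_h^v$ localizing $(\Xi \mu_h) n$ on $f$ while vanishing on all other facets of $\hat{\mathcal{M}}$, thereby forcing $\jump{\vec q_h} = 0$ on $f$. Combined, these steps give $|q_h|_{1,h,\mathcal{T}_h^v}^2 = 0$ as required.

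The crux of the argument is the test-function construction at both stages. One must simultaneously enforce $H(\curl;\mathbb{M})$-conformity of $\mu_h$, the essential conditions defining $\mathring{M}_h^v$ (vanishing tangential trace of $\mu_h$ on $\partial \mathcal{T}_h^v$ together with $(\Xi \mu_h) n = 0$ on $\Gamma_o^v$), and the prescribed volume or facet data on $\Xi \mu_h$, while respecting the transition between the $H(\curl)$ degrees of freedom on $\mu_h$ and the $H(\div)$ behavior of $\Xi \mu_h$ guaranteed by \cref{prop:vecxi}. A counting and construction argument on each combinatorial type of vertex patch admitted by $C_{\mesh}$, verifying that the degree-$(r+2)$ polynomial space provides sufficient interior and facet-localized freedom for every $T$ and every $f \in \Gamma_i^v$, will be the main work.
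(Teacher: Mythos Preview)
Your two-stage plan (kill $\grad\vec q_h$ cell by cell, then kill the jumps facet by facet) matches the paper's structure, and your Step~2 is close to what the paper does.  But Step~1 as you have written it does not work in the lowest-order case $r=0$, and that case is essential.

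You propose to take $\mu_h\in\mathring{M}_h^v$ supported in a single tetrahedron $\overline T$, with vanishing tangential trace on $\partial T$ (forced by $H(\curl)$-conformity of the zero extension) and additionally $(\Xi\mu_h)n=0$ on $\partial T$.  For $r=0$ the space $M_h$ consists of piecewise quadratics, and already the first condition alone leaves nothing: $\mathring{\mathcal P}_2\Lambda^1(T;\mathbb V)=\{0\}$.  Indeed, a vector field with zero tangential trace on every face of $T$ must vanish on every edge (the two adjacent face normals are independent), and a quadratic vanishing on all six edges of a tetrahedron is identically zero.  So there are no cell-supported test functions at all, and your ``counting and construction argument'' cannot get started.

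The paper avoids this by localizing not to cells but to \emph{edges} adjacent to $v$.  For each such edge $e$ with endpoints $v,v_e$ and tangent $t_e$, it sets
\[
  \mu_h^e\big|_T \;=\; \lambda_v\lambda_{v_e}\,\Xi^{-1}\!\bigl(\grad\vec q_h\,t_e t_e^{\tr}\bigr)
\]
on every $T$ adjacent to $e$ and zero elsewhere.  The rank-one factor $t_e t_e^{\tr}$ is the crucial trick: it makes the tangential continuity across each facet $f\supset e$ directly verifiable (only directional derivatives $\partial_{t_e}$ of tangential components of $\vec q_h$ appear, and these are continuous since $\vec q_h\in H(\curl;\mathbb V)$), and it kills the facet integrals in \eqref{eq:ctv} because $t_e^{\tr}n=0$ on any $f\supset e$.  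One is then left with $\sum_T\int_T \lambda_v\lambda_{v_e}\,|\partial_{t_e}\vec q_h|^2\,dx=0$, so $\partial_{t_e}\vec q_h=0$.  Running over the three edges of $T$ through $v$ gives $\grad\vec q_h|_T=0$.  Note that $\mu_h^e$ has degree exactly $r+2$, so no polynomial headroom is wasted; this is why the construction works down to $r=0$.

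Two smaller remarks.  The compactness / reference-macroelement reduction you begin with is the mechanism behind \cref{lem:beta}, not \cref{lem:macroelement}; the present lemma is a purely algebraic kernel statement on a fixed patch and needs no scaling argument.  And in Step~2 you will want the jump to be \emph{constant} on each $f\in\Gamma_i^v$ (which follows once Step~1 gives piecewise-constant $q_h$); the paper then invokes the canonical facet moments of $\mathcal P_{r+2}\Lambda^1$ against $\mathcal P_{r+1}^-\Lambda^1(f;\mathbb V)\supset\mathcal P_0\Lambda^1(f;\mathbb V)$ to produce a facet-localized $\mu_h^f\in\mathring M_h^v$ that reads off $|\jump{\vec q_h}|$.
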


\begin{proof}
  We will first show that if $q_h\in \ker(c,\tilde Q_h;\mathring{M}_h^v)$ then
  $q_h$ is constant in each tetrahedron in $\mathcal{T}_h^v$.  Let $e$ be an
  edge adjacent to $v$, let $v_e$ be the opposite vertex, and let $t_e$ be the
  tangent vector of $e$.  We claim that $\Xi^{-1}(\grad\vec q_ht_e t_e^\tr)$
  is a matrix field that is tangentially continuous across each facet $f$
  adjacent to $e$.  To verify this claim apply a tangent vector $t$ of $f$ to
  the field:
  \[
    \begin{aligned}
      \Xi^{-1}(\grad\vec q_ht_e t_e^\tr)t
      &= (t_e t_e^\tr (\grad \vec q_h)^\tr - \half \trace(\grad\vec q_h t_e t_e^\tr)
      I) t \\
      &= (t_e t_e^\tr (\grad \vec q_h)^\tr - \half (t_e^\tr \grad\vec q_h t_e)
      I) t \\
      &= (\partial_{t_e} (\vec q_h \cdot t))t_e - \half (\partial_{t_e} (\vec q_h
      \cdot t_e)) t.
    \end{aligned}
  \]
  Because $\vec q_h \cdot t_e$ and $\vec q_h \cdot t$ are continuous across $f$, and
  because the directional derivative $\partial_{t_e}$ is tangent to $f$, the
  resulting vector must be continuous across $f$.  

  We define a function $\mu_h^e$ supported on each tetrahedron $T$ adjacent to
  $e$ by
  \[
    \mu_h^e|_T :=
    \lambda_{v}\lambda_{v_e} \Xi^{-1} (\grad \vec q_h t_et_e^\tr),
  \]
  where $\lambda_w$ is the barycentric coordinate of vertex $w$ in $K$, which
  linearly interpolates between $1$ at $w$ and $0$ at the facet opposite $w$.
  Because $q_h$ is a polynomial of degree $r + 1$,
  $\mu_h^e$ is a polynomial of degree $r+2$; by design, $\mu_h^e$ has
  tangential continuity across the facets of $T$ adjacent to $e$ and vanishes
  on the others, so $\mu_h^e \in M_h$; because $e$ is adjacent to $v$, every
  facet $f$ adjacent to $e$ is in $\Gamma_i^e$, so $\mu_h^e$ vanishes on
  $\Gamma_o^v$, and thus $\mu_h^e \in \mathring{M}_h^v$.

  If $f\in \Gamma_i^v$ is adjacent to $e$, then by the fact that the scaling
  $\lambda_v \lambda_{v_e}$ must commute with the pointwise linear algebra,
  \[
    \jump{\vec q_h}^\tr (\Xi \lambda_v
    \lambda_{v_e} \Xi^{-1} (\grad q_h t_e t_e^\tr)) n = \lambda_v
    \lambda_{v_e} \jump{\vec q_h}^\tr \grad q_h t_e t_e^\tr n = 0.
  \]
  Therefore, by \eqref{eq:ctv},
  \[
    \begin{aligned}
      0 = c(\curl \mu_h^e, q_h)
      &=
      -
      \sum_{T\in \mathrm{adj}(e)}
      \int_T \Xi \lambda_v \lambda_{v_e} \Xi^{-1} \grad \vec q_h t_e t_e^\tr : \grad \vec
      q_h\ dx \\
      &=
      -
      \sum_{T\in \mathrm{adj}(e)}
      \int_T \lambda_v \lambda_{v_e} (\grad \vec q_h t_e t_e^\tr):(\grad \vec
      q_h)^\tr)\ dx \\
      &=
      -
      \sum_{T\in \mathrm{adj}(e)}
      \int_T \lambda_v \lambda_{v_e} |\partial_{t_e} \vec q_h|^2\ dx.
    \end{aligned}
  \]
  Because we can repeat this construction on every $e$, we must conclude that
  $\partial_{t_e} \vec q_h = 0$ on every tetrahedron
  $T\in\Delta_3(\mathcal{T}_h^v)$ and for every edge $e$ adjacent to $T$ and
  $v$.  But every $T$ must have three edges adjacent to $v$, whose tangents
  are linearly independent.  We conclude that $q_h$ is constant on every cell.

  If $q_h$ is constant on every cell, then $\jump{\vec q_h}\in
  \mathcal{P}_0(f;\mathbb{V})$ for each $f\in \Gamma_i^v$.  Let
  $\Upsilon:\Lambda^1(\mathbb{V})\to C^\infty(\mathbb{M})$ be
  the equivalence map between $\mathbb{V}$-valued 1-forms and matrices.  For
  each $f$ there must be some $\phi_f\in \mathcal{P}_0\Lambda^1(f;\mathbb{V})$
  such that
  \[
    \begin{aligned}
      \int_f \jump{\vec q_h}^\tr (\Xi \mu_h) n\ ds &=
      \int_f \phi_f \wedge \Tr_f (\Upsilon^{-1} (\mu_h)), && \mu_h \in
      M_h.
    \end{aligned}
  \]
  The degrees of freedom for $M_h$ must be equivalent to the canonical
  degrees of freedom for $\mathcal{P}_{r+2}\Lambda^1(T;\mathbb{V})$, which
  include all moments over facets of the form
  \[
    \begin{aligned}
      &\int_f \phi \wedge \Tr_f(\Upsilon^{-1}(\mu_h)), && \phi \in
      \mathcal{P}_{r+1}^{-1}\Lambda^1(f;\mathbb{V}).
    \end{aligned}
  \]
  For every $r\geq 0$, $\mathcal{P}_0 \Lambda^1(f;\mathbb{V})\subset
  \mathcal{P}_{r+1}^{-1} \Lambda^1(f;\mathbb{V})$.  Therefore for every facet
  there is a function $\mu_h^f\in M_h$ such that
  \[
    \int_f \jump{\vec q_h}^\tr (\Xi \mu_h^f) n\ ds =
    \int_f \phi_f \wedge \Tr_f (\Upsilon^{-1} (\mu_h^f)) = |\jump{\vec q_h}|,
  \]
  and such that $\Upsilon^{-1}(\mu_h^f)$ is in the kernel of all other
  functionals that define
  $\mathcal{P}_{r+2}\Lambda^1(\mathcal{T}_h;\mathbb{V})$.  This implies that
  $(\Xi\mu_h^f)n = 0$ on all facets $g\neq f$, and so $\mu_h^f \in
  \mathring{M}_h^v$.  By  \eqref{eq:ctv}, $0 = c(\curl \mu_h^v, q_h) =
  |\jump{\vec q_h}|$.  We conclude that $q_h$ is constant in
  $\mathcal{T}_h^v$.
\end{proof}

\begin{lem}\label{lem:beta}
  There exists $\beta(C_{\mesh}) > 0$ such that for each
  $v\in\Delta_0(\mathcal{T}_h^v)$
  \[
    \inf_{q_h\in \tilde Q_h} \sup_{\mu_h \in \mathring{M}_h^v}
    c(\curl \mu_h, q_h) \geq \beta |\mu_h|_{\curl}|q_h|_{1,h,v}.
  \]
\end{lem}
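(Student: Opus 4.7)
The plan is to apply Stenberg's macroelement technique. The ingredients are: reduce to a reference configuration by affine rescaling, use \cref{lem:macroelement} plus \cref{prop:hnorm} to obtain non-degeneracy on the reference, invoke finite-dimensional compactness to produce a positive inf-sup constant on the reference, and finally transfer that constant back to the physical $\mathcal{T}_h^v$ by matching the $h$-powers in all the quantities involved.

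First I would reduce to a reference configuration. Shape regularity bounds $|\Delta_3(\mathcal{T}_h^v)| \leq k(C_{\mesh})$, and it forces $h_T$ and $h_{T'}$ for neighboring tetrahedra in $\mathcal{T}_h^v$ to be comparable up to a constant depending only on $C_{\mesh}$. Setting $h_v := \max_{T\in \mathcal{T}_h^v} h_T$ and applying the affine map $x\mapsto h_v^{-1}(x-v)$, we land in one of finitely many combinatorial types $\mathcal{M}$ whose geometric parameter space is compact. A direct computation (just volume and gradient scalings in the definitions) shows $|q_h|_{1,h,\mathcal{T}_h^v}^2\sim h_v^3\,|\hat q_h|_{1,h,\hat v}^2$, $|\mu_h|_{\curl}^2 \sim h_v\,|\hat\mu_h|_{\curl}^2$, and $c(\curl\mu_h,q_h) = h_v^2\,c(\hat\curl\hat\mu_h,\hat q_h)$. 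Consequently the inf-sup ratio $c(\curl\mu_h,q_h)/(|\mu_h|_{\curl}|q_h|_{1,h,v})$ is invariant, up to constants depending only on $C_{\mesh}$, under the rescaling.

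On a fixed reference $\mathcal{M}$, the spaces $\mathring{M}_h^v(\mathcal{M})$ and $\tilde{Q}_h(\mathcal{M})$ are finite-dimensional. By \cref{lem:macroelement}, any $q_h$ such that $c(\curl\mu_h,q_h)=0$ for every $\mu_h\in \mathring{M}_h^v$ is constant on $\mathcal{T}_h^v$, and by \cref{prop:hnorm} these are precisely the $q_h$ with $|q_h|_{1,h,v}=0$. Therefore $(\mu_h,q_h)\mapsto c(\curl\mu_h,q_h)$ descends to a non-degenerate pairing on $\mathring{M}_h^v \times (\tilde{Q}_h/\mathrm{constants})$; compactness of the $|\cdot|_{1,h,v}$ unit sphere in the finite-dimensional quotient then yields an inf-sup constant $\beta(\mathcal{M})>0$. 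Since the mapping from the (compact) geometric parameter space to $\beta(\mathcal{M})$ is continuous and the set of combinatorial types is finite, $\beta(C_{\mesh}) := \inf_{\mathcal{M}} \beta(\mathcal{M})>0$.

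The main obstacle is step one, the scaling bookkeeping: one must verify that every ingredient in the inf-sup inequality, including the mesh-dependent jump terms, transforms with matching powers of $h_v$ so that the reference constant genuinely transports back to $\mathcal{T}_h^v$. This is routine but delicate, and it is the place where shape regularity is indispensable, since otherwise ratios $h_T/h_{T'}$ between neighboring cells would enter the final constant. With the scaling verified, the argument is the standard macroelement one, specialized to the pair $(\mathring{M}_h^v,\tilde{Q}_h)$ rather than to the classical $H_0^1\times L_0^2$ setting treated by Stenberg.
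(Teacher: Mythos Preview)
Your proposal is correct and follows essentially the same route as the paper: both arguments invoke \cref{lem:macroelement} and \cref{prop:hnorm} to obtain a positive constant on each fixed patch by equivalence of finite-dimensional norms, then use scale invariance of the inf--sup ratio together with the continuous dependence of that constant on the patch geometry, compactness of the shape-regular parameter set, and finiteness of the combinatorial macroelement types to pass to a uniform $\beta(C_{\mesh})$. The only cosmetic difference is that you rescale explicitly by $h_v$ before appealing to compactness, whereas the paper phrases the same step as continuous dependence of $\beta_v$ on the element Jacobians and their inverses.
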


\begin{proof}
  (Cf. \textcite[Lemma~3.1]{Stenberg1984}.)

  Let $N_h^v\subset \tilde Q_h$ be the space of constant functions
  over $\mathcal{T}_h^v$.
  \Cref{lem:macroelement} proves that
  \begin{equation}\label{eq:cnorm}
    \inf_{q_h \in \tilde Q_h \backslash N_h^v}
    \sup_{\mu_h \in \mathring{M}_h^v}
    \frac{c(\curl \mu_h, q_h)}{|\mu_h|_{\curl}} > 0.
  \end{equation}
  Therefore \eqref{eq:cnorm} defines a norm on $\tilde Q_h \backslash N_h^v$.
  By \cref{prop:hnorm}, $|\cdot|_{1,h,\mathcal{T}_h^v}$ also defines a norm
  on $\tilde Q_h \backslash N_h^v$, so by the equivalence of finite
  dimensional norms there is $\beta_v > 0$ such that
  \[
    \inf_{q_h \in \tilde Q_h \backslash N_h^v}
    \sup_{\mu_h \in \mathring{M}_h^v}
    c(\curl \mu_h, q_h) > \beta_v
    |\mu_h|_{\curl}|q_h|_{1,h,\mathcal{T}_h^v}.
  \]
  All integrals in the definitions $c$, $|\cdot|_{\curl}$, and
  $|\cdot|_{1,h,\mathcal{T}_h^v}$ depend continuously on the Jacobians and
  inverse Jacobians of the mappings of the elements onto their corresponding
  elements in the reference macroelement $\mathcal{M}_h^v$, i.e.\
  $\beta_v(\{J_{v,T}\}\cup\{J_{v,T}^{-1}\})$ is a continuous function.  A
  scaling argument shows that $\beta_v$ is unchanged under a rescaling of
  coordinates, so we may assume $\det(J_{w,T_0}) = 1$ for every $w$.  Because
  of the mesh regularity constant $C_{\mesh}$, there is an upper bound
  $\sup_{w,T}\max\{|J_{w,T}|,|J^{-1}_{w,T}|\} \leq C(C_{\mesh})$.  Therefore
  there is a compact set containing $\{J_{w,T}\}\cup\{J_{w,T}^{-1}\}$ for
  all $w$ with the same macroelement as $v$, and so $\beta_w$ attains its
  minimum,
  \[
    \beta_{\mathcal{M}_h^v} := \min_{\mathcal{T}_h^w \sim
    \mathcal{M}_h^v}
    \beta_w > 0.
  \]
  Because there are finitely many macroelements, there must be a $\beta$
  such that $\beta_v \geq \beta > 0$ for all $v$.
\end{proof}

We are now ready to prove the main stability result.

\begin{proof}[Proof of \cref{thm:lbbch}]
  (Cf. \textcite[Theorem~2.1]{Stenberg1990}, \textcite[Proposition~3.3]{Verfurth1984}.)

  Let $q_h \in \tilde Q_h$ be given.  By \cref{lem:beta}, for each
  $v\in\mathcal{T}_h$ we can choose $\mu_h^v \in \mathring{M}_h^v$ such that
  $c(\curl \mu_h^v,q_h) \geq \half \beta |q_h|_{1,h,\mathcal{T}_h^v}^2$ 
  and $|\mu_h^v|_{\curl} \leq |q_h|_{1,h,\mathcal{T}_h^v}$.
  Defining $\mu_h := \sum_{v\in \Delta_0(\mathcal{T}_h)} \mu_h^v$, we have
  (using \cref{prop:normequiv})
  \[
    \begin{aligned}
      c(\curl \mu_h, q_h) &=
      \sum_{v\in \Delta_0(\mathcal{T}_h)} c(\curl \mu_h^v, q_h)
      \geq
      \half \beta \sum_{v\in \Delta_0(\mathcal{T}_h)}
      |q_h|_{1,h,\mathcal{T}_h^v}^2
      \geq \half \beta \|q_h\|_{1,h}^2,\\
      |\mu_h|_{\curl}^2 &\leq
      \sum_{v\in \Delta_0(\mathcal{T}_h)}
      |\mu_h^v|_{\curl}^2
      \leq
      \sum_{v\in \Delta_0(\mathcal{T}_h)}
      |q_h|_{1,h,\mathcal{T}_h^v}^2
      \leq
      C \|q_h\|_{1,h}^2.
    \end{aligned}
  \]
  Therefore there is a constant $C_1(C_{\mesh}) > 0$ such that
  \[
    c(\curl \mu_h, q_h) \geq
    C_1 |\mu_h|_{\curl} \|q_h\|_{1,h}
    =
    C_1 \frac{\|q_h\|_{1,h}}{\|q_h\|_0} |\mu_h|_{\curl} \|q_h\|_0.
  \]

  As a corollary to the proof of  \cref{prop:lbbc}, there exists
  $\mu^* \in H^1(\mathbb{M})$ and $C_2(\Omega)$ such that
  $c(\curl \mu^*,q_h) \geq C_2 \|q_h\|_0^2$ and
  $\|\mu^*\|_1\leq \|q_h\|_0$.  Let $\Pi_h^1 \mu^*$ be the smoothed
  projection \cite[\S~5.4]{ArnoldFalkWinther2006} of $H^1(\mathbb{M})$ into
  $\mathcal{P}_{r+2}(\mathcal{T}_h;\mathbb{M})\cap
  H^1(\mathbb{M})\subset M_h$, which satisfies
  \begin{align}\label{eq:smoothed}
    \|\Pi_h^1 \mu^* - \mu^*\|_{0,T} \leq C(C_{\mesh}) h_T \|\mu^*\|_{1,T}, & &
    |\Pi_h^1 \mu^*|_{1,T} \leq C(C_{\mesh}) \|\mu^*\|_{1,T}.
  \end{align}
  We have by \cref{lem:czeronorm}
  \begin{equation}\label{eq:ctrue}
    \begin{aligned}
      c(\curl \Pi_h^1 \mu^*, q_h) &= c(\curl \mu^*,q_h) + c(\curl (\Pi_h^1 \mu^* -
      \mu^*),q_h) \\
      &\geq C_2 \|q_h\|_0^2 - C \|\Pi_h^1 \mu^* - \mu^*\|_{0,h}
      \|q_h\|_{1,h}.
    \end{aligned}
  \end{equation}
  By the standard trace inequality for $H^1(T)$, $\|(\Xi\mu)n\|_{0,\partial
  T}^2 \leq C(C_{\mesh})(h_T^{-1}\|\mu\|_{0,T}^2 + h_T|\mu|_{1,T}^2)$, and
  so by \eqref{eq:smoothed},
  \[
    \begin{aligned}
      \|\Pi_h^1 \mu^* - \mu^*\|_{0,h}^2 =
      \sum_{T \in \Delta_3(\mathcal{T}_h)}
      h_T^{-2} \|\Pi_h^1 \mu^* - \mu^*\|_{0,T}^2 +
      \sum_{f \in \Delta_2(\mathcal{T}_h)}
      h_f^{-1} \|\Xi(\Pi_h^1 \mu^* - \mu^*)n\|_{0,f}^2& \\
      \leq
      C
      \sum_{T \in \Delta_3(\mathcal{T}_h)}
      h_T^{-2}\|\Pi_h^1 \mu^* - \mu^*\|_{0,T}^2 + |\Pi_h^1 \mu^* - \mu^*|_{1,T}^2&
      \leq
      C \|\mu^*\|_1^2.
    \end{aligned}
  \]
  Therefore there exist $C_3(\Omega,C_{\mesh})$ and $C_4(C_{\mesh})$ such that
  \[
    c(\curl \Pi_h^1 \mu^*, q_h)
    \geq
    \|q_h\|_0 (C_2 \|\mu^*\|_1 - C \|\mu^*\|_1 \frac{\|q_h\|_{1,h}}{\|q_h\|_0})
    \geq
    \|q_h\|_0 |\Pi_h^1 \mu^*|_{\curl} (C_3  - C_4 \frac{\|q_h\|_{1,h}}{\|q_h\|_0}).
  \]
  Letting $t = \|q_h\|_{1,h}/\|q_h\|_0$ and combining these two bounds, we get
  \[
    \inf_{q_h \in \tilde Q_h}
    \sup_{\mu \in M_h}
    c(\curl \mu_h, q_h)
    \geq
     |\mu_h|_{\curl}\|q_h\|_0
    \min_{t > 0} \max \{C_1 t, C_3 - C_4 t\} =
     |\mu_h|_{\curl}\|q_h\|_0
     \frac{C_1 C_3}{C_1 + C_4}.
  \]
\end{proof}

Having established the discrete stability of our conforming mixed finite
element approximation, the standard convergence estimates apply.

\begin{thm}\label{thm:convergence}
  (Cf. \textcite[Theorem~7.2]{ArnoldFalkWinther2007}.)
  Suppose $(\sigma, u, p)$ is the solution of \eqref{eq:hr} and $(\sigma_h,
  u_h, p_h)$ is the Galerkin solution in $\Sigma_h \times V_h \times \tilde
  Q_h$.  Let $P_h$ be the $L^2$ projection onto $V_h$.
  There exists $C$ independent of $h$ such that
  \[
    \begin{aligned}
      \|\sigma - \sigma_h \|_{\div} + \|u - u_h\|_0 + \|p - p_h\|_0
      &\leq C \inf_{\stackrel{(\tau_h, v_h, q_h)}{\in \Sigma_h\times V_h
      \times \tilde Q_h}}
      \|\sigma - \tau_h\|_{\div} + \|u - v_h\|_0 + \|p - q_h\|_0, \\
      \|\sigma - \sigma_h\|_0 + \|p - p_h\|_0 + \|u_h - P_{V_h} u\|_0 &\leq 
      C
      \inf_{\stackrel{(\tau_h, q_h)}{\in \Sigma_h\times \tilde Q_h}}
      \|\sigma - \tau_h \|_0 + \|p - q_h\|_0, \\
      |\sigma - \sigma_h|_{\div} &= \|(I - P_{V_h})g\|_0.
    \end{aligned}
  \]
  If $u$ and $\sigma$ are sufficiently smooth, then
  \[
    \begin{aligned}
      \|\sigma - \sigma_h\|_0 + \|p - p_h\|_0 + \|u_h - P_{V_h}u_h\|_0 &\leq
      C h^{r+2} \|u\|_{r + 3},\\
      \|u - u_h\|_0 &\leq C h^{r + 1}\|u\|_{r + 2},\\
      |\sigma - \sigma_h|_{\div} &\leq C h^{r + 1}\|\div
      \sigma\|_{r + 1}.
    \end{aligned}
  \]
\end{thm}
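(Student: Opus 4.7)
The plan is to apply standard Brezzi theory to the saddle point problem with $\sigma_h\in \Sigma_h$ and combined multiplier $(u_h, p_h) \in V_h \times \tilde Q_h$, regarding the constraint form $B(\tau,(v,q)) := b(\tau,v) + c(\tau,q)$. The two Brezzi hypotheses are in place: $a$ is coercive on $\ker(B,\Sigma_h; V_h\times\tilde Q_h)\subseteq \ker(\div,\Sigma_h)$, inherited from the continuous extension of $A$ to $\mathbb{M}$; and $B$ satisfies a discrete LBB condition on $\Sigma_h \times (V_h \times \tilde Q_h)$, which reduces via the same splitting that takes \cref{prop:lbba} to \cref{prop:lbbb,prop:lbbc} into two pieces: the classical discrete \cref{prop:lbbb} for the $\mathcal{P}_{r+1}$--$\mathcal{P}_r$ BDM/DG pair $\Sigma_h \times V_h$, and the discrete analogue of \cref{prop:lbbc} that is \cref{thm:lbbch}. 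Brezzi's theorem then delivers the first quasi-optimality bound.

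The divergence identity is immediate from $\div \Sigma_h \subseteq V_h$: the discrete equation $(\div \sigma_h - g, v_h) = 0$ for all $v_h\in V_h$ combined with $\div \sigma_h \in V_h$ forces $\div \sigma_h = P_{V_h} g$, whence $\div(\sigma-\sigma_h) = (I - P_{V_h})g$. For the refined $L^2$ estimate, I would restrict the stress error equation to $\tau_h \in \ker(b,\Sigma_h;V_h)$: because $b(\tau_h,v_h) = 0$ for every $v_h \in V_h$, every term involving $u - u_h$ falls out, reducing the problem to a saddle point on $\ker(b,\Sigma_h;V_h)\times \tilde Q_h$ for the forms $a$ and $c$ whose Brezzi hypotheses are provided by the coercivity of $a$ and \cref{thm:lbbch}. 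Brezzi then yields quasi-optimality of $\sigma_h$ and $p_h$ in $L^2$ independently of $u$, and the bound on $\|u_h - P_{V_h}u\|_0$ drops out by inverting \cref{prop:lbbb} against the residual in the first discrete equation, now that the $\sigma_h$ and $p_h$ errors are controlled.

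The rates of convergence come by choosing canonical interpolants in each infimum: the matrix-valued BDM interpolant in $\Sigma_h$ gives $O(h^{r+2})$ in $L^2$ and $O(h^{r+1})$ in $H(\div)$ when $\sigma\in H^{r+2}(\mathbb{M})$; the $L^2$ projection into $V_h$ gives $O(h^{r+1})$ for $u\in H^{r+2}(\mathbb{V})$; and $\tilde Q_h$, which under $\vec$ is the first-kind Nédélec space $\mathcal{P}_{r+1}\Lambda^1(\mathcal{T}_h;\mathbb{R})$, admits a canonical interpolant of order $O(h^{r+2})$ in $L^2$ when $p$ is sufficiently smooth. I do not expect a substantive analytic obstacle, since every ingredient is assembled; the main care required is the bookkeeping of plugging the correct interpolation rates into each of the three quasi-optimality estimates, and in particular using the divergence identity already derived to express the third rate via $\|\div \sigma\|_{r+1}$ rather than via $u$.
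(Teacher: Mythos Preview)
Your proposal is correct and matches the paper's approach: the paper does not give a detailed proof of \cref{thm:convergence} but simply states that, once discrete stability (\cref{thm:lbbch}) is in hand, ``the standard convergence estimates apply,'' citing \textcite[Theorem~7.2]{ArnoldFalkWinther2007}; your outline is precisely that standard argument. One minor terminological slip: under $\vec$ the space $\tilde Q_h$ is $\mathcal{P}_{r+1}\Lambda^1(\mathcal{T}_h;\mathbb{R})$, which in FEEC notation is the second-kind N\'ed\'elec space (full polynomials), not first-kind; your claimed $O(h^{r+2})$ $L^2$ approximation rate is nonetheless correct for this space.
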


Because $\tilde Q_h$ has the same approximation order as $\Sigma_h$, an
improved estimate is achieved compared to the $\Sigma_h \times V_h \times Q_h$
element.

\section{Reduction to finite volume method}
\label{sec:nodal}

Let $\mathsf{A}$, $\mathsf{B}$, and $\mathsf{C}$ be the assembled matrices for
the bilinear forms $a$, $b$, and $c$ in $\Sigma_h \times V_h \times \tilde
Q_h$, let $\mathsf{g}$ be the assembled right-hand side, and let
$[\mathsf{s};\mathsf{u};\mathsf{p}]$ be the corresponding vector of
degrees of freedom for $(\sigma_h, u_h, p_h)$.  The matrix equation solved by
$(\sigma_h, u_h, p_h)$ is
\begin{equation}\label{eq:hrmat}
  \begin{bmatrix}
    \mathsf{A} & \mathsf{B}^\tr & \mathsf{C}^\tr \\
    \mathsf{B} & 0 & 0 \\
    \mathsf{C} & 0 & 0
  \end{bmatrix}
  \begin{bmatrix}
    \mathsf{s} \\ \mathsf{u} \\ \mathsf{p}
  \end{bmatrix}
  =
  \begin{bmatrix}
    0 \\ \mathsf{g} \\ 0
  \end{bmatrix}.
\end{equation}
Defining $\mathsf{Z} := \mathsf{A}^{-1} + \mathsf{A}^{-1}\mathsf{C}^\tr
(-\mathsf{C}\mathsf{A}^{-1}\mathsf{C}^\tr)^{-1}\mathsf{C}\mathsf{A}^{-1}$,
$\mathsf{u}$ solves $-\mathsf{B} \mathsf{Z}^{-1} \mathsf{B}^\tr \mathsf{u} =
\mathsf{g}$.  It is not practical to form this double Schur complement for
general discretizations.  For our lowest order $\Sigma_h \times \tilde Q_h$
space, however, a sparse approximation to $\mathsf{Z}^{-1}$ can be constructed
with nodal degrees of freedom and quadrature.  For the remainder of this
section $r=0$. 

(Our approach is inspired by the Multipoint Flux Mixed Finite
Element (MFMFE) method of \textcite{WheelerIotov2006}, and can be seen as a
vector-valued extension of it.)

Because $\Sigma_h \times \tilde Q_h$ restricts to
$\mathcal{P}_{1}(T;\mathbb{M} \times \mathbb{V})$ for each tetrahedron $T$,
we can choose a nodal basis for $\Sigma_h \times \tilde Q_h$: instead of
the moments on facets and edges, we take values at the functions at the
boundary vertices of those facets and edges.

We define a corner quadrature rule for approximating the integrals in $a$ and
$c$:
\[ \begin{aligned} a_h (\sigma_h, \tau_h) &:= \sum_{T\in
    \Delta_3(\mathcal{T}_h)} \frac{|\mathop{\mathrm{vol}} T|}{4} \sum_{v\in
    \Delta_0(\mathcal{T}_h) \cap \overline{T}} \left\{\left(A|_{T}(v)
        \tau_h|_T (v)\right) : \sigma_h|_T (v)\right\}, \\ c_h (\tau_h, q_h)
    &:= \sum_{T\in \Delta_3(\mathcal{T}_h)} \frac{|\mathop{\mathrm{vol}}
    T|}{4} \sum_{v\in \Delta_0(\mathcal{T}_h) \cap \overline{T}} \tau_h|_T (v)
    : q_h|_T (v).  \end{aligned} \]
%
The approximate matrices assembled from $a_h$ and $c_h$ with our nodal degrees
of freedom, $\mathsf{\check{A}}$ and $\mathsf{\check{C}}$, are block diagonal,
with one block for each vertex $v$ in $\mathcal{T}_h$.  The blocks will not
have uniform size: the size of each block will depend on how many facets and
edges are adjacent to $v$.  This block structure extends to the approximation
$\mathsf{\check Z}$ of $\mathsf{Z}$, so that computing $\mathsf{\check
Z}^{-1}$ involves only the solution of small, independent saddle-point systems
at each vertex, and $-\mathsf{B}\mathsf{\check Z}^{-1} \mathsf{B}^\tr$ is
sparse, connecting cell-centered $\mathsf{u}$ degrees of freedom to
vertex-adjacent neighbors.

%
%
%
%
%

Because corner quadrature exactly integrates linear polynomials, we can
combine the Bramble-Hilbert lemma with bounded projections to bound the loss
of accuracy due to quadrature error.

\begin{prop}\label{prop:quaderror}
  Given $\xi \in \Sigma \times Q$, let $\xi_\Sigma$ and $\xi_Q$ be its
  components, and let $\Pi_h \xi := (\Pi_h^{\div} \xi_\Sigma, \tilde P_h
  \xi_Q)$, where $\Pi_h^{\div}$ is the smoothed 2-form projection onto
  $\Sigma_h$ from \cite{ArnoldFalkWinther2006} and $\tilde P_h$ is the $L^2$
  projection on $\tilde Q_h$.  Let $f(\xi,\zeta) := a(\xi_\Sigma,\zeta_\Sigma)
  + c(\xi_\Sigma,\zeta_Q) + c(\zeta_\Sigma,\xi_Q)$ and $f_h(\xi_h,\zeta_h) :=
  a(\xi_{h,\Sigma},\zeta_{h,\Sigma}) + c(\xi_{h,\Sigma},\zeta_{h,Q}) +
  c(\zeta_{h,\Sigma},\xi_{h,Q})$.  Let $E(A,\xi_h, \zeta_h) :=
  f(\xi_h,\zeta_h) - f_h(\xi_h,\zeta_h)$, where $A$ is the compliance field,
  and let $E_c(\tau_h,q_h) := c(\tau_h,q_h) - c_h(\tau_h, q_h)$.
  There exists $C$ independent of $h$ such that
  \begin{align}
    |E(A,\Pi_h \xi,\zeta_h)| &\leq C h\|A\|_{1,\infty}
    \|\xi\|_1 \|\zeta_h\|_0, 
    & &
    \xi \in H^1(\mathbb{M}\times\mathbb{V}),
    \zeta_h \in \Sigma_h \times \tilde Q_h,\label{eq:quaderrorh}\\
    |E(A,\Pi_h\xi,\Pi_h \zeta)| &\leq C h^2\|A\|_{2,\infty}
    \|\xi\|_1 \|\zeta\|_1, 
    & &
    \xi,\zeta \in H^1(\mathbb{M}\times\mathbb{V}),\label{eq:quaderrorh2}\\
    |E_c(\tau_h,q_h)| &\leq C \|\tau_h\|_0 \|q_h\|_{1,h}, 
    & &
    \tau \in H^1(\mathbb{M}), q_h \in \tilde Q_h.\label{eq:quaderrorc}
  \end{align}
  Here $\|\cdot\|_{k,\infty}$ is the norm in $W^{k,\infty}(\Omega)$.
\end{prop}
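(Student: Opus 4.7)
The proof rests on two standard ingredients.  First, the $4$-point vertex quadrature rule $Q_T$ on a tetrahedron is exact on $\mathcal{P}_1(T)$, so a Bramble--Hilbert argument on the reference simplex combined with shape-regular scaling gives $|E_T(g)|\leq C(C_{\mesh})\,h_T^{2}\,|T|^{1/2}\,|g|_{2,T}$ for every $g\in H^{2}(T)$, where $E_T(g):=\int_T g\,dx-Q_T(g)$.  Second, the smoothed projection $\Pi_h^{\div}$ and the $L^2$ projection $\tilde P_h$ are locally $H^1$-stable, so $\sum_T|\Pi_h^{\div}\xi_\Sigma|_{1,T}^{2}+|\tilde P_h\xi_Q|_{1,T}^{2}\leq C\|\xi\|_{1}^{2}$; I will use the routine inverse inequality $|\chi_h|_{1,T}\leq Ch_T^{-1}\|\chi_h\|_{0,T}$ on $\mathcal{P}_1(T)$ whenever I need to trade orders of $h$ for the discrete $L^2$ norm.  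The three bounds become progressively more delicate, and I address them in the order \eqref{eq:quaderrorc}, \eqref{eq:quaderrorh}, \eqref{eq:quaderrorh2}.

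For \eqref{eq:quaderrorc}, the integrand $\tau_h\frob q_h$ is a polynomial of degree $2$ on each $T$.  Since for linears $\sigma,\tau$ the Hessian of $\sigma\frob\tau$ is pointwise $2\sum_{ij}\nabla\sigma_{ij}\otimes\nabla\tau_{ij}$, an elementwise scaling gives $|\tau_h\frob q_h|_{2,T}\leq C|T|^{-1/2}|\tau_h|_{1,T}|q_h|_{1,T}$, whence Bramble--Hilbert produces $|E_T(\tau_h\frob q_h)|\leq Ch_T^{2}|\tau_h|_{1,T}|q_h|_{1,T}$.  An inverse inequality on $\tau_h$ converts this to $Ch_T\|\tau_h\|_{0,T}|q_h|_{1,T}$, and Cauchy--Schwarz in the cell index then bounds the sum by $\|\tau_h\|_0(\sum_T h_T^{2}|q_h|_{1,T}^{2})^{1/2}\leq \|\tau_h\|_0\|q_h\|_{1,h}$, the volumetric sum being dominated by the mesh-dependent norm.

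For \eqref{eq:quaderrorh} the error splits into one $a$-contribution and two symmetric $c$-contributions.  The $c$-contributions are precisely the estimate of the preceding paragraph applied with one argument replaced by $\Pi_h^{\div}\xi_\Sigma$ or $\tilde P_h\xi_Q$; the $H^1$-stability of the projections converts the $|\cdot|_{1,T}$ factors to $\|\xi\|_1$.  For the $a$-contribution I write $A=A_T^0+(A-A_T^0)$ on each $T$ with $A_T^0$ the cell average.  The constant-$A_T^0$ piece has Hessian bounded by $|A_T^0|\,|\nabla\sigma_h|\,|\nabla\tau_h|$, so the calculation of \eqref{eq:quaderrorc} combined with one inverse inequality on $\zeta_{h,\Sigma}$ gives $Ch_T\|A\|_{0,\infty}|\Pi_h^{\div}\xi_\Sigma|_{1,T}\|\zeta_{h,\Sigma}\|_{0,T}$.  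For the residual $(A-A_T^0)$ piece, the integral and the quadrature are each bounded by $L^{\infty}\times L^{2}\times L^{2}$ (using the equivalence of $Q_T(|\cdot|^{2})^{1/2}$ and $\|\cdot\|_{0,T}$ on $\mathcal{P}_{1}(T)$), with $\|A-A_T^0\|_{\infty,T}\leq Ch_T\|A\|_{1,\infty}$.  Summing both pieces and using local $H^1$-stability of $\Pi_h^{\div}$ yields \eqref{eq:quaderrorh}.

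For \eqref{eq:quaderrorh2}, no inverse inequality is available, but one more order of $h$ must be extracted.  I replace $A_T^0$ by an affine approximation $A_T^1$, so $\|A-A_T^1\|_{\infty,T}\leq Ch_T^{2}\|A\|_{2,\infty}$; this step is the sole origin of the $\|A\|_{2,\infty}$-dependence and, after summation, produces $Ch^{2}\|A\|_{2,\infty}\|\xi\|_0\|\zeta\|_0$.  On the smooth piece, expanding $(A_T^1\sigma_h\frob\tau_h)_{,ij}$ via Leibniz and using $\nabla^{2}A_T^1=0$ shows $|A_T^1\sigma_h\frob\tau_h|_{2,T}\leq C(\|A\|_{0,\infty}+h_T\|A\|_{1,\infty})|T|^{-1/2}|\sigma_h|_{1,T}|\tau_h|_{1,T}$, so Bramble--Hilbert and summation give $Ch^{2}\|A\|_{1,\infty}\|\xi\|_1\|\zeta\|_1$; the $c$-contributions are handled as in \eqref{eq:quaderrorc} but now without the inverse inequality, yielding $Ch^{2}\|\xi\|_1\|\zeta\|_1$.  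The only real obstacle is the bookkeeping: one must verify that the refined splitting $A=A_T^1+(A-A_T^1)$ gives exactly the claimed dependence and that the Leibniz expansion of $|A_T^1\sigma_h\frob\tau_h|_{2,T}$ does not require more regularity of $A$ than $W^{2,\infty}$.  Beyond that, every estimate is a routine Bramble--Hilbert computation on shape-regular tetrahedra together with standard projection stabilities.
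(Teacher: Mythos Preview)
Your argument is correct in outline and yields the stated bounds, but it follows a genuinely different route from the paper. The paper never computes Hessians of the integrand or invokes inverse inequalities. Instead it introduces the elementwise constant projection $P_0$ and decomposes the quadrature error algebraically: for \eqref{eq:quaderrorh} one writes
\[
E(A,\Pi_h\xi,\zeta_h)=E((I-P_0)A,\Pi_h\xi,\zeta_h)+E(P_0A,(I-P_0)\Pi_h\xi,\zeta_h)+E(P_0A,P_0\Pi_h\xi,\zeta_h),
\]
observes that the last term vanishes because the integrand is linear, and bounds the first two using only $\|(I-P_0)A\|_{0,\infty}\leq Ch\|A\|_{1,\infty}$ and the global $L^2$ approximation $\|(I-P_0)\Pi_h\xi\|_0\leq Ch\|\xi\|_1$. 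For \eqref{eq:quaderrorh2} the same trick is applied symmetrically in $\Pi_h\xi$ and $\Pi_h\zeta$, with the additional observation that $E(A,P_0\Pi_h\xi,P_0\Pi_h\zeta)$ vanishes for affine $A$ and hence is $O(h^2\|A\|_{2,\infty})$. For \eqref{eq:quaderrorc} the paper simply notes $E_c(\tau_h,q_h)=E_c(\tau_h,(I-P_0)q_h)$.

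What each approach buys: your Bramble--Hilbert/Hessian route is self-contained and makes the polynomial degree arithmetic explicit, but it leans on a broken $H^1$-stability claim $\sum_T|\tilde P_h\xi_Q|_{1,T}^2\leq C\|\xi_Q\|_1^2$ for the \emph{global} $L^2$ projection onto $\tilde Q_h$. That estimate is not automatic under shape regularity alone (it is the subject of the Bramble--Pasciak--Steinbach / Carstensen literature and typically needs quasi-uniformity or a mesh-grading condition); you can recover it here by the inverse-inequality trick $|\tilde P_h\xi_Q|_{1,T}\leq Ch_T^{-1}\|\tilde P_h\xi_Q-P_0\xi_Q\|_{0,T}$ only if you have a \emph{local} $L^2$ approximation bound for $\tilde P_h$, which you have not justified. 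The paper's $P_0$-decomposition sidesteps this entirely, since it needs only the global $L^2$ approximation of $\Pi_h$ and never a local $H^1$ bound. Apart from this point your argument is sound; the slightly imprecise form you state for $|A_T^1\sigma_h\frob\tau_h|_{2,T}$ (the cross terms actually carry $\|\sigma_h\|_{0,T}$ or $\|\tau_h\|_{0,T}$ rather than two $H^1$ seminorms) does not affect the final $O(h^2)$ conclusion.
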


\begin{proof}%
  (Cf. \textcite[Lemma~3.5, Lemma~4.2]{WheelerIotov2006}.)

  Let $P_0$ be the $L^2$ projection onto
  $\mathcal{P}_0(\mathcal{T}_h;\mathbb{X})$ for each field $\mathbb{X}$.
  We have
  \[
    E(A,\Pi_h\xi,\zeta_h) = 
    E((I-P_0)A,\Pi_h\xi,\zeta_h) +
    E(P_0 A,(I - P_0)\Pi_h\xi,\zeta_h) +
    E(P_0 A,P_0\Pi_h \xi,\zeta_h).
  \]
  The last error term is zero because the product is linear.
  Therefore
  \[
    E(A,\xi,\zeta_h)
    \leq
    C \|\zeta_h\|_0 (\|(I-P_0)A\|_{0,\infty}\|\xi\|_0
    +\|P_0 A\|_{0,\infty}\|(I - P_0)\Pi_h\xi\|_0).
  \]
  We have $\|(I-P_0)A\|_{0,\infty}\leq C h \|A\|_{1,\infty}$, and
  \[
    \|(I - P_0)\Pi_h \xi\|_0 \leq
    \|(I - \Pi_h) \xi\|_0 +
    \|(I - P_0) \xi\|_0 +
    \|P_0(I - \Pi_h) \xi\|_0 \leq C h \|\xi\|_1,
  \]
  which proves \eqref{eq:quaderrorh}.

  To prove \eqref{eq:quaderrorh2}, we expand the error as
  \[
    \begin{aligned}
      E(A,\Pi_h\xi,\Pi_h \zeta)
      =\
      &E(A,P_0\Pi_h\xi,P_0 \Pi_h \zeta) +
      \\
      &E((I-P_0)A,(I-P_0)\Pi_h\xi,\Pi_h \zeta) +
      E((I-P_0)A,P_0\Pi_h\xi,(I-P_0)\Pi_h \zeta) +
      \\
      &E(P_0 A,(I - P_0)\Pi_h\xi,(I - P_0)\Pi_h \zeta) +
      \\
      &E(P_0 A,P_0 \Pi_h\xi,(I - P_0)\Pi_h \zeta) +
      E(P_0 A,(I-P_0)\Pi_h\xi,P_0 \Pi_h \zeta).
    \end{aligned}
  \]
  The first term is exact for $A\in \mathcal{P}_1(\mathcal{T}_h)$, and so is
  $o(h^2 \|A\|_{2,\infty})$; the last two error terms are zero because their
  products are linear.
  Therefore
  \[
    \begin{aligned}
      E(A,\Pi_h\xi,\Pi_h \zeta)
      \leq
      C h^2(&\|A\|_{2,\infty}\|\Pi_h \xi\|_0 \|\Pi_h\zeta\|_0 +
      \\
      &\|A\|_{1,\infty}(\|\xi\|_1\|\Pi_h\zeta\|_0 + \|\Pi_h\xi\|_0\|\zeta\|_1) +
      \\
      &\|A\|_0\|\xi\|_1 \|\zeta\|_1).
    \end{aligned}
  \]

  The last bound \eqref{eq:quaderrorc} follows from $E_c(\tau_h, q_h) =
  E_c(\tau_h,(I-P_0)q_h)$.
\end{proof}

Unlike the MFMFE method, inexact quadrature could potentially affect the
stability of the discretization.  We must prove that this is not the case.

\begin{prop}\label{prop:lbbchquad}
  \Cref{thm:lbbch} holds for $c_h$ (though with a different bound
  $\gamma$).
\end{prop}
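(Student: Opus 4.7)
The plan is to re-run the proof of \cref{thm:lbbch} with $c$ replaced by $c_h$, invoking the quadrature error bound \eqref{eq:quaderrorc} wherever the two forms must be compared. Two ingredients need attention: the local macroelement inf-sup (the analog of \cref{lem:beta}) and the smoothed-projection step.

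For the local inf-sup, observe first that $c_h$ agrees with $c$ on $\mathring{M}_h^v \times N_h^v$: if $q_h$ is piecewise constant in $\mathcal{T}_h^v$, then $\curl \mu_h\frob q_h$ is piecewise linear, corner quadrature is exact, and $E_c(\curl \mu_h, q_h)=0$. Hence $N_h^v \subseteq \ker(c_h, \tilde Q_h; \mathring{M}_h^v)$. For the reverse inclusion (the analog of \cref{lem:macroelement}) the integration-by-parts identity is unavailable, and the error $E_c$ is not small in the relevant norms, so a naive perturbation does not suffice. Instead I would verify the kernel property directly using test functions closely related to those in \cref{lem:macroelement}. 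The function $\mu_h^e|_T := \lambda_v\lambda_{v_e}\Xi^{-1}(\grad \vec q_h\, t_e t_e^\tr)$ still lies in $\mathring{M}_h^v$ and vanishes at every vertex of $\mathcal{T}_h^v$; hence in $c_h(\curl\mu_h^e, q_h)$ only the vertex values of $\curl\mu_h^e$ at $v$ and $v_e$ survive, and these depend on $\grad \lambda_{v_e}$ and $\grad \lambda_v$ respectively (each multiplied by the constant $\Xi^{-1}(\grad \vec q_h\, t_e t_e^\tr)$). A direct calculation should express $c_h(\curl\mu_h^e, q_h)$ as a definite-sign quadratic in $\partial_{t_e}\vec q_h$, which vanishes only when $\vec q_h$ is constant in each tetrahedron adjacent to $e$; the facet-jump step of \cref{lem:macroelement} then adapts via the nodal interpretation of $M_h$ (again exploiting that vertex values of $\lambda_v\lambda_{v_e}$-type bumps are zero, so only the nodal structure of $\curl\mu_h^f$ remains). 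With the kernel property in hand, the finite-dimensional norm equivalence and scaling/compactness argument of \cref{lem:beta} deliver a uniform local constant $\beta(C_{\mesh})>0$ for $c_h$.

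For the smoothed-projection step, reuse $\mu^* \in H^1(\mathbb{M})$ and its smoothed projection $\Pi_h^1\mu^*$ from the proof of \cref{thm:lbbch}, and split
\[
  c_h(\curl \Pi_h^1\mu^*, q_h) = c(\curl \Pi_h^1\mu^*, q_h) - E_c(\curl \Pi_h^1\mu^*, q_h).
\]
By \eqref{eq:quaderrorc}, $|E_c| \leq C \|\curl \Pi_h^1\mu^*\|_0 \|q_h\|_{1,h} \leq C'\|\mu^*\|_1\|q_h\|_{1,h}$, which merges with the existing $-C_4 t$ term of the original bound (with $t=\|q_h\|_{1,h}/\|q_h\|_0$) into a single enlarged coefficient $C_4'$. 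The local sum decomposition $\mu_h^{\mathrm{loc}}:=\sum_v \mu_h^v$ now satisfies $c_h(\curl\mu_h^{\mathrm{loc}}, q_h) \geq \frac{1}{2}\beta\|q_h\|_{1,h}^2$ and $|\mu_h^{\mathrm{loc}}|_{\curl}\leq C\|q_h\|_{1,h}$; combining it with $\Pi_h^1\mu^*$ and optimizing over $t$ exactly as at the end of the proof of \cref{thm:lbbch} yields \eqref{eq:infsupch} for $c_h$ with a modified but positive $\gamma$.

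The main obstacle is the local kernel step. The integration-by-parts identity underpinning \cref{lem:macroelement} does not transfer to a quadrature-based form, and the quadrature error is of the same scale as the form itself in the norms at our disposal, so neither the original argument nor a perturbation of it applies directly. The plan rests on the observation that $\mu_h^e$ vanishes at every vertex of $\mathcal{T}_h^v$, so $c_h(\curl\mu_h^e, q_h)$ reduces to a small vertex-level expression that a direct calculation should show has a definite sign; executing this computation cleanly, and treating the facet-jump step by the analogous nodal-basis argument, is the technical heart of the proof.
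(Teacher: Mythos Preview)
Your proposal is correct and follows essentially the same route as the paper: the paper also absorbs the quadrature error $E_c$ into the $C_4$-coefficient via \eqref{eq:quaderrorc}, and then verifies the macroelement kernel property for $c_h$ using the same test functions $\mu_h^e$, exploiting that $\lambda_v\lambda_{v_e}$ vanishes at every corner so that the quadrature contribution reduces to a positive multiple of $|\partial_{t_e}\vec q_h|^2$. For the facet-jump step the paper uses the shortcut you already noted but then did not invoke: once $q_h$ is piecewise constant, $c_h=c$ and the original argument of \cref{lem:macroelement} applies verbatim, so no separate nodal analysis of $\mu_h^f$ is needed.
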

\begin{proof}
  Given $q_h$, let $\mu^*$ be as in the proof of \cref{thm:lbbch}. 
  We can modify \eqref{eq:ctrue} for $c_h$ using the bound
  \eqref{eq:quaderrorc}:
  \[
    \begin{aligned}
      c_h(\Pi_h^1 \mu^*,q_h) &=
      c(\Pi_h^1 \mu^*,q_h) - E_c(\Pi_h^1 \mu^*,q_h) \\
      &\geq \|q_h\|_0 |\Pi_h^1 \mu^*|_{\curl} (C_3  - C_4
      \frac{\|q_h\|_{1,h}}{\|q_h\|_0}) - |E_c(\Pi_h^1 \mu^*,q_h)|
      \\
      &\geq \|q_h\|_0 |\Pi_h^1 \mu^*|_{\curl}(C_3 - (C_4 + C)
      \frac{\|q_h\|_{1,h}}{\|q_h\|_0}).
    \end{aligned}
  \]
  Therefore to complete the proof using the macroelement technique, all that
  needs to be done is to verify that \cref{lem:macroelement} holds for $c_h$:
  that if $q_h \in \ker(c_h, \tilde Q_h;\mathring{M}_h^v)$, then $q_h$ is
  constant in $\mathcal{T}_h^v$. 

  Suppose $q_h\in \ker(c_h, \tilde Q_h;\mathring{M}_h^v)$.
  Now as before we define
  $\mu_h^e$ by
  \[
    \mu_h^e|_T :=
    \lambda_{v}\lambda_{v_e} \Xi^{-1} (\grad \vec q_h t_et_e^\tr),
  \]
  and observe
  \[
    \begin{aligned}
      \div (\Xi\mu_h^e) &=
      \div (\lambda_v \lambda_{v_e} (\grad \vec q_h t_e t_e^\tr)) \\
      &=\lambda_v \lambda_{v_e} \div (\grad \vec q_h t_e t_e^\tr) +
      (\grad \vec q_h t_e t_e^\tr) (\lambda_v \grad \lambda_{v_e} +
      \lambda_{v_e} \grad \lambda_v) \\
      &=\lambda_v \lambda_{v_e} \div (\grad \vec q_h t_e t_e^\tr) +
      (\lambda_v \partial_{t_e} \lambda_{v_e} +
      \lambda_{v_e} \partial_{t_e} \lambda_v)\partial_{t_e} \vec q_h.
    \end{aligned}
  \]
  The function $\lambda_v \lambda_{v_e}$ is zero at each corner of $T$,
  so that term is ignored in the computation of $c_h$. Combining this with the
  fact that the $\lambda_{v_1}(v_2) = \delta_{v_1,v_2}$,
  we get the following simplification for the
  contribution of $T$ to $c_h(\mu_h^e,q_h)$:
  \[
    \begin{aligned}
      - \frac{|\vol T|}{4}
      &\sum_{w\in \Delta_0(\mathcal{T}_h^v) \cap \overline{T}}
      (\lambda_v \partial_{t_e} \lambda_{v_e} +
      \lambda_{v_e} \partial_{t_e} \lambda_{v})\partial_{t_e} \vec q_h \cdot
      \vec q_h (w) \\=\
      &-\frac{|\vol T|}{4} \partial_{t_e} \vec q_h \cdot (\partial_{t_e} \lambda_{v_e} \vec q_h (v) + 
      \partial_{t_e} \lambda_v \vec q_h (v_e)).
    \end{aligned}
  \]
  Because $q_h$ is linear, we expand $q_h(x)$ around the
  midpoint of $e$, $(v + v_e)/2$:
  \[
    \begin{aligned}
      \vec q_h (x)
      &=
      \vec q_h((v+v_e)/2) + \grad \vec q_h \cdot (x - (v + v_e)/2),
    \end{aligned}
  \]
  by which
  \[
    \begin{aligned}
      &\partial_{t_e} \vec q_h \cdot (\partial_{t_e} \lambda_{v_e} \vec q_h (v) + 
      \partial_{t_e} \lambda_v \vec q_h (v_e))
      \\
      =\
      &\partial_{t_e} \vec q_h \cdot(\partial_{t_e}(\lambda_v + \lambda_{v_e})\vec q_h((v + v_e)/2) +
      \frac{h_e}{2}\partial_{t_e}(\lambda_v - \lambda_{v_e}) \partial_{t_e} \vec
      q_h).
    \end{aligned}
  \]
  Now $\lambda_v - \lambda_{v_e}$ is linear and is equal to $1$ at $v$ and
  $-1$ at $v_e$, so $\partial_{t_e} (\lambda_v - \lambda_{v_e}) = 2 / h_e$;
  likewise, $\partial_{t_e}(\lambda_v + \lambda_{v_e}) = 0$.
  Therefore
  \[
    \begin{aligned}
      \partial_{t_e} \vec q_h \cdot(\vec q_h((v + v_e)/2) +
      \frac{h_e}{2}(\partial_{t_e}(\lambda_v - \lambda_{v_e}) \partial_{t_e} \vec
      q_h))
      = 
      &|\partial_{t_e} \vec q_h|^2.
    \end{aligned}
  \]
  As before, we conclude that $q_h$ must be constant on each tetrahedron.  But
  $c_h(\mu_h,q_h)=c(\mu_h,q_h)$ in this case, so $q_h$ is
  constant on $\mathcal{T}_h^v$.
\end{proof}

Having proven discrete stability with inexact quadrature, the bounds on the
quadrature error let us prove optimal convergence of $u_h$ to the true
solution $u$, as well as an improved estimate for $\|u_h - P_h u\|_0$ via a
duality argument, which implies second-order pointwise convergence at the
centroids of the elements.

\begin{thm}
  Let $(\sigma, u, p)$ be the solution of \eqref{eq:hr}, and let
  $(\sigma_h, u_h, p_h)$ be the solution of
  \[
    \begin{aligned}
      a_h(\sigma_h,\tau_h) + c_h(\sigma_h,q_h) + c_h(\tau_h,p_h) +
      b(\sigma_h,v_h) + b(\tau_h,u_h) &= (g,v_h), & &
      (\tau_h,v_h,q_h) \in
      \Sigma_h \times V_h \times \tilde Q_h.
    \end{aligned}
  \]
  Then assuming $\sigma$, $u$, and $p$ are smooth enough, there exists $C_1$
  independent of $h$ such that
  \begin{align}
      \|\sigma - \sigma_h\|_0 + \|p - p_h\|_0 &\leq C_1 h \|A\|_{1,\infty}
      (\|\sigma\|_1 + \|p\|_1),\label{eq:stressconv} \\
      |\sigma - \sigma_h|_{\div} &\leq C_1 h \|\div
      \sigma\|_1,\label{eq:divconv}\\
      \|u - u_h\|_0 &\leq C_1 h
      \|A\|_{1,\infty}(\|\sigma\|_1 + \|p\|_1 + \|u\|_1).\label{eq:uconv}
  \end{align}
  If \eqref{eq:hr} is sufficiently regular that there exists $C_2$ independent
  of $g$ such that
  \begin{equation}\label{eq:u2}
    \|\sigma\|_1 + \|p\|_1 + \|u\|_1 \leq C_2 \|g\|_0,
  \end{equation}
  then there exists $C_3$ independent of $h$ such that
  \[
    \|u_h - P_h u\|_0 \leq C_3 h^2 \|A\|_{2,\infty}(\|\div \sigma\|_1 +
    \|\sigma\|_1 + \|p\|_1).
  \]
\end{thm}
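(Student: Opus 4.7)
The plan is to adapt the perturbed saddle-point analysis of \textcite{WheelerIotov2006} to the present vector-valued setting. The first three estimates \eqref{eq:stressconv}--\eqref{eq:uconv} will follow from the discrete stability of \cref{prop:lbbchquad} together with the first-order consistency bound \eqref{eq:quaderrorh}, while the superconvergence bound on $\|u_h - P_h u\|_0$ requires an Aubin--Nitsche duality argument that invokes the full second-order bound \eqref{eq:quaderrorh2}.

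First I would derive a perturbed Galerkin orthogonality identity by subtracting the continuous weak form \eqref{eq:hr} from its quadrature-perturbed discrete counterpart, yielding for every $(\tau_h, v_h, q_h)\in\Sigma_h\times V_h\times\tilde Q_h$ a relation of the schematic form
\[
  a_h(\sigma_h - \Pi_h^{\div}\sigma,\tau_h) + b(\tau_h, u_h - P_h u) + c_h(\tau_h, p_h - \tilde P_h p) + b(\sigma_h - \Pi_h^{\div}\sigma, v_h) + c_h(\sigma_h - \Pi_h^{\div}\sigma, q_h) = R(\tau_h, v_h, q_h),
\]
where the residual $R$ collects the approximation errors of $\Pi_h^{\div}$ and $\tilde P_h$ together with the quadrature-consistency terms $E(A,\Pi_h\xi,\cdot)$ and $E_c(\Pi_h^{\div}\xi_\Sigma,\cdot)$. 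Applying the discrete inf-sup of \cref{prop:lbbchquad} bounds $\|\sigma_h - \Pi_h^{\div}\sigma\|_{\div} + \|p_h - \tilde P_h p\|_0$ by a constant times $\|R\|$, and \eqref{eq:quaderrorh} together with the approximation properties of $\Pi_h^{\div}$ and $\tilde P_h$ then yields \eqref{eq:stressconv}. For \eqref{eq:divconv}, since $b$ is unperturbed and $\div \Sigma_h \subset V_h$, testing $b(\sigma_h,v_h)=(g,v_h)$ with $v_h=\div\sigma_h$ shows $\div\sigma_h=P_h g$, so $|\sigma-\sigma_h|_{\div}=\|(I-P_h)\div\sigma\|_0\leq Ch\|\div\sigma\|_1$. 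Estimate \eqref{eq:uconv} follows from the discrete form of \cref{prop:lbbb}: pick $\tau_h\in\Sigma_h$ with $\|\tau_h\|_{\div}$ bounded and $b(\tau_h,u_h-P_h u)\geq\gamma\|u_h-P_h u\|_0$, substitute into the perturbed orthogonality to bound $\|u_h-P_h u\|_0$ by the already-controlled stress and multiplier errors, and combine with $\|u-P_h u\|_0\leq Ch\|u\|_1$.

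For the superconvergence bound, let $\tilde g:=u_h-P_h u\in V_h$, and let $(\phi,\psi,\chi)$ solve the dual mixed elasticity problem with right-hand side $\tilde g$; the regularity hypothesis \eqref{eq:u2} then gives $\|\phi\|_1+\|\psi\|_1+\|\chi\|_1\leq C\|\tilde g\|_0$. Using the commuting relation $\div\Pi_h^{\div}=P_h\div$ together with the $L^2$-orthogonality $(u-P_h u,w_h)=0$ for $w_h\in V_h$ (applied to $w_h=\div\Pi_h^{\div}\phi$), one arrives at
\[
  \|\tilde g\|_0^2 = (\tilde g,\div\phi) = b(\Pi_h^{\div}\phi, u_h - u).
\]
Expanding the right-hand side via the Galerkin orthogonality with test $\tau=\Pi_h^{\div}\phi$ rewrites it as a sum of approximation-error products such as $a(\sigma-\Pi_h^{\div}\sigma,\Pi_h^{\div}\phi)$ and $c(\Pi_h^{\div}\phi,p-\tilde P_h p)$, together with quadrature terms of the form $E(A,\Pi_h\xi_{\mathrm{prm}},\Pi_h\xi_{\mathrm{dl}})$ and $E_c(\Pi_h^{\div}\phi,\tilde P_h p)$; each is bounded at rate $h^2$ by combining \eqref{eq:quaderrorh2} with the dual regularity and the bounds \eqref{eq:stressconv}--\eqref{eq:divconv}.

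The main obstacle is the bookkeeping in the duality step: one has to verify that the quadratic-order quadrature bound \eqref{eq:quaderrorh2}, rather than the linear-order \eqref{eq:quaderrorh}, is applicable to every consistency term that enters the duality identity. This is exactly where both the $H^1$ regularity of the dual solution and the commuting projection property become essential; once that structural check is completed, the final estimate follows by assembling the usual chain of approximation, stability, and consistency bounds already established in the earlier parts of the paper.
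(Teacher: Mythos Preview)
Your outline is essentially the paper's own proof: the same perturbed Galerkin orthogonality, the same use of \cref{prop:lbbchquad} and the commuting property $\div\Pi_h^{\div}=P_h\div$ to reduce to the kernel, the same Aubin--Nitsche duality with \eqref{eq:u2}, and the same reliance on \cref{prop:quaderror} for consistency. One minor imprecision to flag in the duality step: the quadrature consistency term that actually arises is $E(A,\xi_h,\Pi_h\zeta)$, with the \emph{discrete} solution $\xi_h$ in the first slot rather than $\Pi_h\xi$, so \eqref{eq:quaderrorh2} does not apply directly to all pieces; you must split $\xi_h=\Pi_h\xi+(\xi_h-\Pi_h\xi)$ and bound the second piece via \eqref{eq:quaderrorh} combined with the already-established $O(h)$ bound \eqref{eq:stressconv} on $\|\xi_h-\Pi_h\xi\|_0$ --- which is precisely what the paper does and what your parenthetical reference to \eqref{eq:stressconv}--\eqref{eq:divconv} suggests you intend.
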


\begin{proof}
  (Cf. \textcite[Theorems~3.4, 4.1, 4.3]{WheelerIotov2006}.)

  Let $\xi := (\sigma,p)$, $\xi_h := (\sigma_h,p_h)$, $K_h :=
  \ker(b,\Sigma_h;V_h) \times \tilde Q_h = \ker(\div,\Sigma_h) \times \tilde
  Q_h$, and let $f$, $f_h$, $E$ and $\Pi_h$ be as in \cref{prop:quaderror}.

  \Cref{prop:lbbchquad} proves that there exists $\beta > 0$ independent of $h$
  such that
  \[
    \inf_{\xi_h \in K_h} \sup_{\tau_h \in K_h} f_h(\xi_h,\tau_h) \geq \beta
    \|\xi\|_0 \|\tau\|_0.
  \]
  Therefore, because $\div \Pi_h^{\div} \sigma = P_h g = \div \sigma_h$,
  we have $\Pi_h \xi - \xi_h\in K_h$, and so by \eqref{eq:quaderrorh}
  \[
    \begin{aligned}
      \|\Pi_h \xi - \xi_h \|_0
      &\leq
      \frac{1}{\beta}
      \sup_{\zeta_h \in K_h}
      \frac{f_h(\Pi_h \xi - \xi_h, \zeta_h)}
      {\|\zeta_h\|_0}
      \\
      &=
      \frac{1}{\beta}
      \sup_{\zeta_h \in K_h}
      \frac{f_h(\Pi_h \xi,\zeta_h)) + b(\zeta_{h,\Sigma},u_h)}
      {\|\zeta_h\|_0}
      \\
      &=
      \frac{1}{\beta}
      \sup_{\zeta_h \in K_h}
      \frac{f(\xi,\zeta_h) - E(A,\xi,\zeta_h)}
      {\|\zeta_h\|_0}
      \\
      &=
      \frac{1}{\beta}
      \sup_{\zeta_h \in K_h}
      \frac{b(\zeta_{h,\Sigma},u) - E(A,\xi,\zeta_h)}
      {\|\zeta_h\|_0}
      \\
      &\leq C h \|A\|_{1,\infty}\|\xi\|_1.
    \end{aligned}
  \]
  The bound \eqref{eq:stressconv} follows by the triangle inequality and the
  approximation properties of $\Pi_h^{\div}$.

  The bound \eqref{eq:divconv} is a standard result of the commuting property
  of $\Pi_h^{\div}$ and is unchanged by inexact quadrature.

  Due to the stability
  of $b$ on $\Sigma_h \times V_h$ and \eqref{eq:quaderrorh},
  \[
    \begin{aligned}
      \|u_h - P_h u\|_0 &\leq C
      \sup_{\tau_h \in \Sigma_h}
      \frac{b(\tau_h,u_h - P_h u)}{\|\tau_h\|_{\div}} \\
      &=
      C
      \sup_{\tau_h \in \Sigma_h}
      \frac{f(\xi,(\tau_h,0)) - f_h(\xi_h,(\tau_h,0))}{\|\tau_h\|_0}
      \\
      &=
      C
      \sup_{\tau_h \in \Sigma_h}
      \frac{f(\xi - \Pi_h \xi,(\tau_h,0)) - f_h(\xi_h - \Pi_h \xi,(\tau_h,0)) +
      E(A,\Pi_h \xi,(\tau_h,0))}{\|\tau_h\|_0} \\
      &\leq
      Ch\|A\|_{1,\infty}\|\xi\|_1.
    \end{aligned}
  \]
  The bound \eqref{eq:uconv} follows by the triangle inequality and the
  approximations properties of $P_h$.

  Now let us assume that \eqref{eq:u2} holds, and let $(\tau,v,q)$ be the
  solution of \eqref{eq:hr} with $g = P_h u - u_h$.  Because $g = P_h g$ in
  this case,
  \[
    \|P_h u - u_h\|_0^2
    =
    b(\tau, P_h u - u_h)
    = 
    b(\Pi_h \tau, P_h u - u_h).
  \]
  Defining $\zeta := (\tau, q)$, we have
  \[
    \begin{aligned}
      b(\Pi_h \tau, P_h u - u_h) =\
      &f_h(\xi_h,\Pi_h \zeta) - f(\xi,\Pi_h \zeta)\\
      =\ 
      &f(\xi_h - \xi,\Pi_h \zeta) - E(A,\xi_h,\Pi_h \zeta)
      \\
      =\ 
      &f(\xi_h - \xi,\zeta) + f(\xi_h - \xi, \Pi_h \zeta - \zeta)- E(A,\xi_h,\Pi_h \zeta)
      \\
      =\ 
      &b(\sigma - \sigma_h,v) + f(\xi_h - \xi, \Pi_h \zeta - \zeta)- E(A,\xi_h,\Pi_h \zeta)
      \\
      =\ 
      &b(\sigma - \sigma_h,v) + f(\xi_h - \xi, \Pi_h \zeta - \zeta)
      - E(A,\Pi_h \xi,\Pi_h \zeta)
      - E(A,\xi_h - \Pi_h \xi, \Pi_h \zeta).
    \end{aligned}
  \]
  Because $\div(\sigma) - \div(\sigma_h) = (I - P_h) \div \sigma$, it is
  orthogonal to $V_h$ and
  \[
    |b(\sigma - \sigma_h,v)| = |b(\sigma - \sigma_h,v - P_h v)|
    \leq
    C h^2 \|\div \sigma\|_1 \|v\|_1.
  \]
  By this result, the already established bound for $\|\xi_h - \xi\|_0$, the
  approximation properties of $\Pi_h = (\Pi_h^{\div}, P_h)$, and the
  quadrature bounds \eqref{eq:quaderrorh} and \eqref{eq:quaderrorh2}, we have
  \[
    \begin{aligned}
      \|P_h u - u_h\|_0^2
      &\leq
      C h^2 (\|\div \sigma\|_1 \|v\|_1 + (\|A\|_{1,\infty} + \|A\|_{2,\infty})(\|\sigma\|_1 +
      \|p\|_1)(\|\tau\|_1 + \|q\|_1))\\
      &\leq
      C h^2 \|A\|_{2,\infty}(\|\div \sigma\|_1 + \|\sigma\|_1 +
      \|p\|_1)(\|v\|_1 + \|\tau\|_1 + \|q\|_1).
    \end{aligned}
  \]
  Invoking \eqref{eq:u2} completes the proof.
\end{proof}

\section{Discussion}

Although we have proved optimal and improved convergence estimates of the
$\Sigma_h \times V_h \times \tilde Q_h$ element for arbitrary order, the
existence of high order discretizations of linear elasticity whose stress is
pointwise symmetric (\cite{AdamsCockburn2004,ArnoldAwanouWinther2008,Hu2017})
means that our element may only have practical advantages for low orders.  The
limit of strongly-symmetric finite elements composed of ``pure'' polynomial
elements (i.e.\ without bubble functions) appears to be $r=3$ \cite{Hu2017}.
It is worthwhile to compare our element to other low order offerings.

\begin{itemize}
  \item
    \textcite{ArnoldFalkWinther2007} introduced $\Sigma_h \times V_h \times
    Q_h$, against which our element has already been compared.  We can
    estimate the sizes of $|Q_h|$ and $|\tilde Q_h|$ using the Euler
    characteristic of $\Omega$ and the heuristics $|\Delta_2| \approx
    2|\Delta_3|$ and $|\Delta_1| \approx 7|\Delta_0|$ \cite{Stenberg2010},
    giving us an approximation $|\Delta_1| \approx 7/6 |\Delta_3|$.  The
    lowest order $Q_h$ space has three degrees of freedom per cell, while
    $\tilde Q_h$ has two degrees of freedom per edge, which predicts $|\tilde
    Q_h|\approx 7/9|Q_h|$.  The same paper, however, introduced a reduced
    element with a stress space $|\Sigma_h^-| = 2/3 |\Sigma_h|$.

    Neither $\Sigma_h\times Q_h$ nor $\Sigma_h^- \times Q_h$ admits a jointly
    nodal discretization, so reduction to just the displacement variables is
    not a local operation.  $\Sigma_h$ alone, however, can be locally
    eliminated with corner quadrature, leaving a finite volume discretization
    of $V \times Q$ (in keeping with the interpretation of the elasticity
    complex as a resolution of the rigid-body motions).
  \item
    \textcite{HuZhang2016} developed a pointwise conservative and symmetric
    discretization $\hat{\Sigma}_{1,h}^+ \times V_{1,h}$, where $V_{1,h}$ is
    the discontinuous approximation space of rigid-body motions on each
    tetrahedron, and $\hat{\Sigma_{1,h}^+}$ is the union of
    $\mathcal{P}_1(\mathcal{T}_h;\mathbb{S})\cap C^0$ with $\div$-conforming
    bubble functions on each facet.  This space has the advantage of being
    strongly-symmetric, and its size is $6(|\Delta_0| +
    |\Delta_2| + |\Delta_3|)\approx 18.9|\Delta_0|$, compared to the lowest
    order element introduced here, $|\Sigma_h \times V_h \times \tilde Q_h| =
    2|\Delta_1| + 9|\Delta_2| + 3|\Delta_3| \approx 23.3|\Delta_3|$.  The
    stress space $\hat{\Sigma}_{1,h}^+$ would seem to admit a nodal
    discretization and quadrature, meaning that their element could also be
    reduced to a first-order finite volume scheme, though with six degrees of
    freedom per cell instead of three.

    Suppose we wished to solve a problem with a nonlinear stress-strain
    relationship, such that the stress degrees of freedom could not be
    eliminated a~priori.  The block-diagonal matrix for the approximation
    $c_h$ would still allow one to locally project $\tilde Q_h$ out of
    $\Sigma_h$ a~priori, leaving a system of the size $|\Sigma_h| + |V_h| -
    |\tilde Q_h|\approx 18.7|\Delta_3|$, making our element more competitive.
    Our element also has the practical advantage of being composed of function
    spaces that are already widely implemented.
  \item
    \textcite{AmbartsumyamKhattatovYotov2017} recently proposed a finite
    element for linear elasticity $\Sigma_h \times V_h \times \check Q_h$ for
    $r=0$, with the difference from the present work being that $\check Q_h :=
    \mathcal{P}_{r+1}(\mathcal{T}_h;\mathbb{K}) \cap C^0(\mathbb{K})$, i.e.\
    the multiplier field is fully continuous.  Their approach also reduces to
    a finite volume method, and is a mixed finite element presentation of the
    finite volume method of \textcite{Nordbotten2014}.  The similarity of our
    element to theirs is not intentional, but not surprising, as both
    approaches are generalization of the MFMFE method of
    \textcite{WheelerIotov2006}.  Though unpublished, the proof of stability
    they describe in \cite{AmbartsumyamKhattatovYotov2017} suggests that it is
    the same as \cref{prop:lbbchquad}.  As both elements appear to have the
    same approximation properties, the notable difference is the size of the
    multiplier spaces ($\check Q_h \subset \tilde Q_h$): $|\check Q_h| = 3
    |\Delta_0| \approx 1/2 |\Delta_3|$ vs. $|\tilde Q_h| = 2 |\Delta_1|
    \approx 7/3|\Delta_0|$.  Their element thus requires less work to compute
    the displacement Schur complement, while ours results in a more nearly
    symmetric stress, and a smaller approximation space when only the
    multipliers are projected out.
  \item
    \textcite{GopalakrishnanGuzman2011} and \textcite{ArnoldAwanouWinther2014}
    have introduced strongly-symmetric but $H(\div)$-nonconforming
    discretizations.  The reduced element in
    \textcite{ArnoldAwanouWinther2014} appears to be the smallest: its size is
    $9|\Delta_2| + 6|\Delta_3| \approx 24|\Delta_3|$, and its convergence is
    first-order in the $L^2$ norm of displacement and stress in general, and
    the convergence in displacement is second-order with full elliptic
    regularity.  These $H(\div)$-nonconforming methods require careful
    selection of the stress degrees of freedom to ensure stability, so a lack
    of nodal discretization would appear to rule out local elimination of the
    stress degrees of freedom.  However, because all degrees of freedom are in
    cells and on facets, the methods are hybridizable, reducing to symmetric
    positive definite systems in Lagrange multipliers on the facets.  The
    lowest order cases require three multipliers on each facet, meaning their
    reduced size is $\approx 6 |\Delta_3|$.
\end{itemize}

\printbibliography

\end{document}